\newcommand{\ov}[1]{\overline{#1}}
\newcommand{\F}{\mathbb{F}}
\newcommand{\N}{\mathbb{N}}
\newcommand{\Z}{\mathbb{Z}}
\newcommand{\T}{\mathbb{T}}
\newcommand{\Is}{\mathcal{I}}
\newcommand{\Js}{\mathcal{J}}
\newcommand{\Mr}{\mathfrak{M}_r}
\newcommand{\Ml}{\mathfrak{M}_l}
\newcommand{\rank}{\operatorname{rank}}
\newcommand\numberthis{\addtocounter{equation}{1}\tag{\theequation}}
\newtheorem{theorem}{Theorem}[section]
\newtheorem{lemma}[theorem]{Lemma}
\newtheorem{corollary}[theorem]{Corollary}
\newtheorem{proposition}[theorem]{Proposition}
\newtheorem{definition}{Definition}[section]
\theoremstyle{definition}
\newtheorem*{remark}{Remark}
\newtheorem*{example}{Example}
\begin{document}

\title{Matroidal Root Structure of Skew Polynomials over Finite Fields\thanks{The authors were partially supported by the National Science Foundation under grant DMS-1547399.}
}

\author{Travis Baumbaugh \qquad Felice Manganiello\\
		tbaumba@clemson.edu \qquad manganm@clemson.edu\\
		Department of Mathematical Sciences,\\
		Clemson University\\
		Clemson, SC 29634-0975, USA
}

\maketitle

\begin{abstract}
A skew polynomial ring $R=K[x;\sigma,\delta]$ is a ring of polynomials with non-commutative multiplication. This creates a difference between left and right divisibility, and thus a concept of left and right evaluations and roots. A polynomial in such a ring may have more roots than its degree, which leads to the concepts of closures and independent sets of roots. There is also a structure of conjugacy classes on the roots. In $R=\F_{q^m}[x,\sigma]$, this leads to the matroids $\Mr$ and $\Ml$ of right independent and left independent sets, respectively. The matroids $\Mr$ and $\Ml$ are isomorphic via the extension of the map $\phi:[1]\to[1]$ defined by $\phi(a)=a^{\llbracket m\rrbracket}$, where $\llbracket i\rrbracket=\frac{q^{i-1}-1}{q-1}$ is a notation introduced to simplify the exponents in evaluation polynomials. Additionally, extending the field of coefficients of $R$ results in a new skew polynomial ring $S$ of which $R$ is a subring, and if the extension is taken to include roots of an evaluation polynomial of $f(x)$ (which does not depend on which side roots are being considered on), then all roots of $f(x)$ in $S$ are in the same conjugacy class.

\noindent Keywords {Matroids, Skew Polynomial Rings, Finite Fields, Isomorphism}

\noindent{12E20 \and 16S36 \and 05B35}
\end{abstract}

\section{Introduction}
\label{introduction}
When operating in a skew polynomial ring $R$, there are several key differences from a commutative polynomial ring. Factorizations are not unique $\cite{OrePaper}$, and a polynomial may have more roots than its degree. Because of non-commutativity, there is a difference between left and right divisibility, and evaluation is not as straight-forward as in the commutative case $\cite{LamLeroy}$. These properties make skew polynomial rings useful for coding \cite{BoucherCoding}, secret sharing \cite{SecretSharing}, and even cryptographic key exchange \cite{BoucherCrypto}. The relation between skew polynomial rings and skew fields is explored in \cite{CohnFields}, along with an introduction to basic properties. This paper delves into some of the structure of polynomials' left and right roots in a skew polynomial ring when viewed as elements of a larger skew polynomial ring.

In Chapter 2, some background on skew polynomials and their properties are given. The process for evaluating a skew polynomial at a value on either side is described. The evaluation polynomial $f(y)=\sum_{i=1}^nf_iy^{\frac{q^i-1}{q-1}}$ is defined. Some of the structure of the roots is also described in terms of the independence of a set of roots and the closure of a set of roots. A conjugacy relationship is defined so that we may speak of the conjugacy class $[a]$ of all elements conjugate to $a\in\F_{q^m}$, and the structure of these conjugacy classes as group cosets is discussed.

In Chapter 3, we first examine the matroidal structure of the sets of independent elements. The set of all right independent sets $\Is$ defines a matroid $\Mr=(\F_{q^m},\Is)$, and likewise the set of all left independent sets $\Js$ defines a matroid $\Ml=(\F_{q^m},\Js)$. We show that these two matroids are actually isomorphic via a map that, for any $a$ in the conjugacy class $[1]$ is defined as $\phi(a)=a^{\frac{q^{m-1}-1}{q-1}}$. As seen in \cite{Matroid}, these matroids may be used in the network coding structure of \cite{Network}.

Finally, in Chapter 4, we examine polynomials in the skew polynomial ring $R=\F_{q^m}[x;\sigma]$ as elements of a larger ring $S=\T[x;\gamma]$, where $\T$ is a finite extension of $\F_{q^m}$ and $\gamma$ restricted to $\F_{q^m}$ is $\sigma$. We discuss what this means for the structure of the roots. Finally, a specific extension field is used to construct the larger ring such that the polynomial has the maximum number of right roots, all of which are in a single class. This splitting field is related to but may be distinct from the splitting field of the linearized polynomial. We also show that it results in the maximum number of left roots, while nothing is gained from a larger extension. This is followed by a review of the results so far.

\section{Background on Skew Polynomial Rings}
\label{background}
Before examining skew polynomials in extension fields, it is necessary to set down some fundamental definitions and properties.

Skew polynomials rings were introduced by Oyestein Ore in the 1933 paper, ``Theory of Non-Commutative Polynomials'' $\cite{OrePaper}$.

\begin{definition}
	Given a division ring $K$, an injective homomorphism \linebreak ${\sigma:K\to K}$, and a $\sigma$-derivation $\delta$ (an additive homomorphism $\delta:K\to K$ such that $\delta(ab)=\sigma(a)\delta(b)+\delta(a)b$ for all $a,b\in K$), let $R$ be the set of polynomials of the form $f(x)=\sum_{i=0}^na_ix^i$, where $n\in\N$ and $a_i\in K$ for $i\in\{0,\dots,n\}$. The skew polynomial ring denoted $R=K[x;\sigma,\delta]$ is this set of polynomials with standard addition, but with multiplication determined by the rule
	\[xa=\sigma(a)x+\delta(a)\]
	for all $a\in K$. We will call the elements of $R$ skew polynomials.
\end{definition}

Later, we take $\delta\equiv0$, that is $\delta(a)=0$ for all $a\in K$, and so we write $R=K[x;\sigma]$. In fact, we also generally take $K=\F_{q^m}$, and so $\sigma$ is not only a homomorphism, but an automorphism. This is the case with the ring in the following example, which will be used throughout the paper.

\begin{example}
	Let $K=\F_{3^2}=\F_3[\alpha]/(\alpha^2+2\alpha+2)$, $\sigma(a)=a^3$, and $\delta\equiv0$. Then $\alpha$ is a primitive element of $K$. We let $R=K[x;\sigma]$ (since $\delta\equiv0$). Then for instance:
	\begin{align*}
	(x+1)(x+\alpha)&=x^2+\alpha^6x+\alpha,
	\end{align*}
	but reversing the order of the factors, we have
	\begin{align*}
	(x+\alpha)(x+1)&=x^2+\alpha^2x+\alpha.
	\end{align*}
\end{example}

As seen in the example, multiplication in a skew polynomial ring is not in general commutative (in fact, it is commutative if and only if $\sigma$ is the identity homomorphism and $\delta\equiv0$). However, since $K$ is a division ring, there are no zero divisors, and so if $a,b\in K$ with $a,b\ne0$, then because $\sigma^n(b)\ne0$, we have $a\sigma^n(b)\ne0$. This means that by applying the multiplication rule $m$ times, the product of $ax^n\cdot bx^m$ will have a leading term $a\sigma^{n}(b)x^{m+n}$, and so if two polynomials $f(x),g(x)\in R$ have degrees $n$ and $m$ respectively, their product will have degree $m+n$. By considering the implications of multiplication adding degrees on polynomials of degree $0$ and $1$ under typical assumptions of associativity and distribution of multiplication, one sees that the definition of skew polynomial rings is in fact the most general definition such that this is the case ($\cite{OrePaper}$, equation 3).

We now speak about the divisibility of polynomials in a skew polynomial ring $R$. This is first shown in $\cite{OrePaper}$:

\begin{lemma}
	A skew polynomial ring $R$ is a right Euclidean ring and, if $\sigma$ is surjective, a left Euclidean ring.
\end{lemma}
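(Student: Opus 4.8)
The plan is to prove a one-sided division algorithm on each side; the Euclidean structure then follows immediately, with the degree map $\deg\colon R\setminus\{0\}\to\N$ serving as the Euclidean norm (recall from the discussion above that degrees add under multiplication in $R$, so $\deg f\le\deg(fg)$ for nonzero $f,g$).

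Right division first. Given $f,g\in R$ with $g\ne 0$, I claim there are $q,r\in R$ with $f=qg+r$ and $r=0$ or $\deg r<\deg g$, arguing by induction on $\deg f$. If $\deg f<\deg g$, take $(q,r)=(0,f)$. Otherwise set $n=\deg f$, $m=\deg g$, and let $a,b\in K$ be the respective leading coefficients. Applying the multiplication rule repeatedly, $cx^{n-m}g(x)$ has leading term $c\,\sigma^{n-m}(b)\,x^n$, with the $\delta$-contributions only in lower degree. Since $K$ is a division ring and $\sigma$ is injective, $\sigma^{n-m}(b)\ne 0$, so $c=a\,\sigma^{n-m}(b)^{-1}$ is defined and $f(x)-cx^{n-m}g(x)$ has degree $<n$; applying the inductive hypothesis to this difference and rearranging gives the claim, so $R$ is right Euclidean.

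Left division is the mirror image, and this is where surjectivity of $\sigma$ enters. Assuming $\sigma$ surjective, it is bijective, so every power $\sigma^{-k}$ is defined. With notation as above, now form $g(x)\,cx^{n-m}$, whose leading term is $b\,\sigma^{m}(c)\,x^n$; to cancel the leading term of $f$ we need $b\,\sigma^{m}(c)=a$, i.e. $c=\sigma^{-m}(b^{-1}a)$, which exists precisely because $\sigma^{-m}$ makes sense. Then $f(x)-g(x)\,cx^{n-m}$ has degree $<n$ and the same induction produces $q,r$ with $f=gq+r$ and $r=0$ or $\deg r<\deg g$; hence $R$ is left Euclidean when $\sigma$ is surjective. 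If desired, uniqueness of $(q,r)$ on either side follows by a standard degree argument: from $q_1g+r_1=q_2g+r_2$ one gets $(q_1-q_2)g=r_2-r_1$, whose left side has degree $\ge\deg g$ unless $q_1=q_2$, which then forces $r_1=r_2$.

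I do not anticipate a genuine obstacle — this is essentially Ore's original observation — so the only real care needed is bookkeeping the noncommutativity: in right division the cancelling monomial multiplies $g$ on the left and the twist $\sigma^{n-m}$ lands on the leading coefficient of $g$, whereas in left division the monomial multiplies on the right and the twist $\sigma^{m}$ lands on the monomial's own coefficient, which is exactly the spot that forces $\sigma$ to be invertible.
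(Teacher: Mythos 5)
Your proof is correct and is essentially Ore's original division-algorithm argument, which the paper itself does not reproduce but simply cites to \cite{OrePaper}. You handle the noncommutative bookkeeping correctly on both sides, and in particular you correctly locate why surjectivity of $\sigma$ is needed only for left division (the twist $\sigma^{m}$ lands on the unknown coefficient $c$, so one must solve $\sigma^m(c)=b^{-1}a$), while right division needs only injectivity and the division-ring structure to invert $\sigma^{n-m}(b)$.
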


That is, given any two polynomials $f(x),g(x)\in R$, there exist unique \linebreak$q_r(x),r_r(x)\in R$ such that
\[f(x)=q_r(x)g(x)+r_r(x),\]
where $\deg r_r(x)<\deg g(x)$ or $r_r(x)=0$. An algorithm for this is also laid out in \cite{EuclideanAlg}, while the problem of fast factoring in general is addressed in \cite{Giesbrecht} and \cite{Factoring}. Likewise, if $\sigma$ is surjective, there exist unique $q_l(x),r_l(x)\in R$ such that
\[f(x)=g(x)q_l(x)+r_l(x),\]
where $\deg r_l(x)<\deg g(x)$ or $r_l(x)=0$.

\begin{example}
	Working in the ring $R$ as before, if $f(x)=x^2+\alpha^5x+\alpha^7$ and $g=(x-1)$, then
	\[f(x)=(x+\alpha^3)g(x)+0,\]
	so $q_r(x)=x+\alpha^3$ and $r_r(x)=0$, and
	\[f(x)=g(x)(x+\alpha)+\alpha^6,\]
	so $q_l(x)=x+\alpha$ and $r_l(x)=\alpha^6$.
\end{example}

\begin{definition}
	If $r_r(x)=0$, we say that $g(x)$ divides $f(x)$ on the right and write $g(x)|_rf(x)$. If $r_l(x)=0$, we say that $g(x)$ divides $f(x)$ on the left and write $g(x)|_lf(x)$.
\end{definition}

In the previous example, for instance, we had $g(x)\!\mid_r\!f(x)$, but
$g(x)\!\nmid_l\!f(x)$. From these definitions of divisibility, we define the concept of evaluation in a skew polynomial ring.

When evaluating a polynomial $f(x)\in R$, we desire that the value of the polynomial evaluated at $a\in K$ is the remainder of division by the polynomial $(x-a)$. Since $R$ is non-commutative, however, left and right evaluations are different in general.

\begin{definition}
	Given any polynomial $f(x)\in R$ and any $a\in K$, we may write $f(x)=q_r(x)(x-a)+r$. Then the evaluation of $f(x)$ on the right at $a$ is $f(a)_r=r$. Likewise, if $\sigma$ is surjective, then left division is possible and we may write $f(x)=(x-a)q_l(x)+s$, and the evaluation of $f(x)$ on the left at $a$ is $f(a)_l=s$.
\end{definition}

\begin{example}
	For instance, returning to a previous example, if we have \linebreak$f(x)=x^2+\alpha^5x+\alpha^7$, then $f(1)_r=0$ and $f(1)_l=\alpha^6$.
\end{example}

Thus we find that $f(a)_r=0$ if and only if $(x-a)|_rf(x)$, and similarly $f(a)_l=0$ if and only if $(x-a)|_lf(x)$.

To have this property for evaluation, it is not possible to simply ``plug in'' $a$ in place of $x$ in the polynomial and apply the powers, multiplication by coefficients, and addition. Rather, a new formula for evaluation outlined in $\cite{LamLeroy}$ is necessary:

\begin{theorem}
	Define recursively
	\begin{align*}
	N_0(a)&=1\\
	N_{i+1}(a)&=\sigma(N_i(a))a+\delta(N_i(a))\numberthis\label{rec1}
	\end{align*}
	for all $a\in K$ and $i\ge1$. For $a\in K$ and any polynomial $f(x)=\sum_{i=0}^d f_ix^i\in R$, we have $f(a)_r=\sum_{i=0}^df_iN_i(a)$.
\end{theorem}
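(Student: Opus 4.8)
The plan is to reduce the statement to a computation on monomials and then extend by linearity, using only the defining multiplication rule and the uniqueness part of right division. First I would prove by induction on $i$ that for every $i\ge 0$ and every $a\in K$ there is a polynomial $q_i(x)\in R$ (depending on $a$) with
\[
x^i = q_i(x)(x-a) + N_i(a),
\]
i.e.\ the right-division remainder of the monomial $x^i$ by $(x-a)$ is exactly $N_i(a)$. The base case $i=0$ is immediate, since $x^0=1=0\cdot(x-a)+1$ and $N_0(a)=1$.

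For the inductive step, assuming $x^i=q_i(x)(x-a)+N_i(a)$, I would multiply on the left by $x$ and push the variable past the constant $N_i(a)\in K$ using $xc=\sigma(c)x+\delta(c)$:
\[
x^{i+1}=x\,q_i(x)(x-a)+x\,N_i(a)=x\,q_i(x)(x-a)+\sigma(N_i(a))x+\delta(N_i(a)).
\]
Rewriting $\sigma(N_i(a))x=\sigma(N_i(a))(x-a)+\sigma(N_i(a))a$ and collecting the terms divisible on the right by $(x-a)$ gives
\[
x^{i+1}=\bigl(x\,q_i(x)+\sigma(N_i(a))\bigr)(x-a)+\bigl(\sigma(N_i(a))a+\delta(N_i(a))\bigr),
\]
where the trailing constant is precisely $N_{i+1}(a)$ by the recursion \eqref{rec1}; this completes the induction with $q_{i+1}(x)=x\,q_i(x)+\sigma(N_i(a))$.

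Finally I would assemble the general case: multiplying the monomial identity on the left by $f_i$ and summing over $i$ yields
\[
f(x)=\sum_{i=0}^d f_i x^i=\Bigl(\sum_{i=0}^d f_i\,q_i(x)\Bigr)(x-a)+\sum_{i=0}^d f_i N_i(a),
\]
and since each $N_i(a)$ lies in $K$, the second summand either vanishes or has degree $0$, hence degree less than $\deg(x-a)=1$. By the uniqueness of the right quotient and remainder (the right-Euclidean property of $R$ stated above), this exhibits $\sum_{i=0}^d f_i N_i(a)$ as the right remainder of $f(x)$ upon right division by $(x-a)$, which is the definition of $f(a)_r$.

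I do not anticipate a genuine obstacle here; the proof is a clean induction. The only thing requiring care is the non-commutativity: the quotients must be kept uniformly on the left of $(x-a)$, and one must multiply the scalars $f_i$ and $\sigma(N_i(a))$ on the correct side when invoking $xc=\sigma(c)x+\delta(c)$ and when factoring $(x-a)$ out on the right. Everything beyond that is routine bookkeeping.
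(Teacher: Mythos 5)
Your proof is correct: the induction establishing that $N_i(a)$ is the right remainder of $x^i$ upon division by $(x-a)$, followed by left-multiplication by the $f_i$ and the uniqueness of the right remainder, is exactly the standard argument (it is the one in Lam--Leroy, which the paper cites for this theorem without reproducing a proof). The non-commutative bookkeeping — applying $xc=\sigma(c)x+\delta(c)$ to $c=N_i(a)$ and keeping all quotients on the left of $(x-a)$ — is handled correctly, so nothing is missing.
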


Similarly, if $\sigma$ is an automorphism, the following holds.

\begin{theorem}
	Define recursively
	\begin{align*}
	M_0(a)&=1\\
	M_{i+1}(a)&=a\sigma^{-1}(M_i(a))-\delta(\sigma^{-1}(M_i(a)))\numberthis\label{rec2}
	\end{align*}
	for all $a\in K$ and $i\ge1$. For $a\in K$ and any polynomial $f(x)=\sum_{i=0}^n f_ix^i\in R$, we rewrite $f(x)=\sum_{i=0}^n x^if_i'\in R$ and have $f(a)_l=\sum_{i=0}^nM_i(a)f_i'$.
\end{theorem}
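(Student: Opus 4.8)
\emph{Proof strategy.} The plan is to reduce everything to one monomial computation and then invoke the uniqueness of the left Euclidean remainder. First I would prove, by induction on $i$, that the left remainder of $x^i$ upon division by $(x-a)$ equals $M_i(a)$; equivalently, that $x^i=(x-a)q_i(x)+M_i(a)$ for some $q_i(x)\in R$. The case $i=0$ is immediate from $M_0(a)=1$ and $1=(x-a)\cdot 0+1$. For the inductive step I would write $x^{i+1}=x^i\cdot x$, insert the inductive hypothesis to get $x^{i+1}=(x-a)q_i(x)x+M_i(a)x$, and then rewrite the stray term $M_i(a)x$. Putting $c=\sigma^{-1}(M_i(a))$ (this is where surjectivity of $\sigma$ enters) and using the commutation rule $xc=\sigma(c)x+\delta(c)$, one computes, since $\sigma(c)=M_i(a)$, that $(x-a)c=\sigma(c)x+\delta(c)-ac=M_i(a)x-M_{i+1}(a)$ by the recursion \eqref{rec2}, that is,
\[
M_i(a)\,x=(x-a)\,\sigma^{-1}(M_i(a))+M_{i+1}(a).
\]
Hence $x^{i+1}=(x-a)\bigl(q_i(x)x+\sigma^{-1}(M_i(a))\bigr)+M_{i+1}(a)$, and since $M_{i+1}(a)\in K$ has degree below $\deg(x-a)=1$, uniqueness of the left Euclidean remainder forces it to be $M_{i+1}(a)$, closing the induction.

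With the monomial case in hand, the general statement is routine. Taking $f(x)=\sum_{i=0}^n x^i f_i'$ and substituting $x^i=(x-a)q_i(x)+M_i(a)$, one obtains $f(x)=(x-a)\sum_{i=0}^n q_i(x)f_i'+\sum_{i=0}^n M_i(a)f_i'$. Because every $M_i(a)f_i'$ lies in $K$, the trailing sum is an element of $K$ and thus has degree less than $1$; by the uniqueness part of the left division lemma it is therefore the left remainder, so $f(a)_l=\sum_{i=0}^n M_i(a)f_i'$ as claimed.

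The one point that needs care is the monomial identity for $M_i(a)x$: in the left-evaluation setting the rule $xc=\sigma(c)x+\delta(c)$ must be applied to move $x$ past a scalar from the correct side, which is precisely why the ``right coefficient'' expansion $f(x)=\sum x^i f_i'$ is the natural bookkeeping here and why $\sigma^{-1}$ and $\delta\circ\sigma^{-1}$ appear in \eqref{rec2}; everything else is manipulation of degrees together with uniqueness of remainders. One could alternatively deduce the theorem from the already-established right-evaluation formula by transporting it through the anti-isomorphism with the opposite ring, but the direct induction above is shorter and self-contained.
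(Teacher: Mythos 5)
Your proof is correct. The monomial induction is sound: from $x^i=(x-a)q_i(x)+M_i(a)$, right-multiplying by $x$ and using the commutation rule on $c=\sigma^{-1}(M_i(a))$ gives exactly $M_i(a)x=(x-a)\sigma^{-1}(M_i(a))+M_{i+1}(a)$, and the recursion \eqref{rec2} is what makes the constant term come out right; uniqueness of the left Euclidean remainder (available since $\sigma$ is assumed an automorphism) then finishes both the inductive step and the passage to a general $f(x)=\sum_i x^if_i'$. The paper does not prove this statement at all: it quotes it from Lam and Leroy, and its own working mechanism for left evaluation is the transport of Corollary~\ref{leftasright}, identifying left evaluation in $R=F[x;\sigma,\delta]$ with right evaluation in $R'=F[x;\sigma^{-1},-\delta\circ\sigma^{-1}]$ by matching the recurrences for $M_i$ and $N_i$. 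Your direct argument buys something that reduction does not: the paper's identification explicitly uses commutativity of the coefficient field to equate $\sigma^{-1}(N_i(a))a$ with $a\sigma^{-1}(M_i(a))$, so it only works when $K$ is a field, whereas your induction goes through verbatim over any division ring $K$ (with $\sigma$ bijective), and it is self-contained rather than resting on the right-evaluation theorem plus a change of ring. The only cosmetic point is that the recursion should be read as valid for $i\ge0$ (despite the statement's ``$i\ge1$''), which your base case $M_0(a)=1$, $M_1(a)=a$ implicitly uses.
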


We also have the following result which relates left evaluation of polynomials in the ring $R$ to right evaluation of a different polynomial in the ring $R'$. 

\begin{corollary}\label{leftasright}
	Let $F$ be a field. If we define $\sigma'=\sigma^{-1}$ and $\delta'=-\delta\circ\sigma^{-1}$, then $R'=F[x;\sigma',\delta']$ is a skew polynomial ring, and for any $f(x)=\sum_{i=0}^df_ix^i$ rewritten as $f(x)=\sum_{i=0}^d x^if_i'\in R$ and any $a\in F$, we have $f(a)_l=f'(a)_r$, where $f'(x)\in R'$ is the polynomial $f'(x)=\sum_{i=0}^df_i'x^i$.
\end{corollary}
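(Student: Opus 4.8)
The plan is to verify the claim in two stages: first confirm that $R' = F[x;\sigma',\delta']$ is genuinely a skew polynomial ring, and then match the left-evaluation formula in $R$ against the right-evaluation formula in $R'$ term by term. For the first stage, since $\sigma$ is an automorphism of the field $F$, its inverse $\sigma' = \sigma^{-1}$ is again an automorphism, so it is in particular an injective homomorphism. It remains to check that $\delta' = -\delta\circ\sigma^{-1}$ is a $\sigma'$-derivation, i.e.\ that $\delta'(ab) = \sigma'(a)\delta'(b) + \delta'(a)b$ for all $a,b\in F$. This is a direct computation: expand $\delta'(ab) = -\delta(\sigma^{-1}(a)\sigma^{-1}(b))$ using the $\sigma$-derivation rule for $\delta$, namely $\delta(uv) = \sigma(u)\delta(v) + \delta(u)v$ with $u = \sigma^{-1}(a)$, $v = \sigma^{-1}(b)$, and rearrange; the $\sigma$ on the outside cancels against the $\sigma^{-1}$'s inside to leave exactly the required identity. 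Once $\delta'$ is confirmed to be a $\sigma'$-derivation, the definition of skew polynomial rings applies and $R' = F[x;\sigma',\delta']$ is a skew polynomial ring.

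For the second stage, I would compare the operators governing the two evaluations. By the theorem on left evaluation, $f(a)_l = \sum_{i=0}^d M_i(a) f_i'$ where $M_i$ is defined by the recursion $M_0(a) = 1$, $M_{i+1}(a) = a\sigma^{-1}(M_i(a)) - \delta(\sigma^{-1}(M_i(a)))$. By the theorem on right evaluation applied in $R'$, $f'(a)_r = \sum_{i=0}^d f_i' N_i'(a)$ where $N_i'$ is the $N$-recursion built from $\sigma'$ and $\delta'$: $N_0'(a) = 1$, $N_{i+1}'(a) = \sigma'(N_i'(a))a + \delta'(N_i'(a))$. Substituting $\sigma' = \sigma^{-1}$ and $\delta' = -\delta\circ\sigma^{-1}$ into this last recursion gives $N_{i+1}'(a) = \sigma^{-1}(N_i'(a))a - \delta(\sigma^{-1}(N_i'(a)))$. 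This is not literally the $M$-recursion — the surviving multiplication by $a$ sits on the \emph{right} of $\sigma^{-1}(N_i'(a))$ rather than on the left as in $M_{i+1}$ — so the two sequences are genuinely different as functions, but the point is that the coefficients $f_i'$ are positioned so that the two sums still agree. The cleanest route is an induction showing that the two \emph{sums} $\sum_{i=0}^d M_i(a) f_i'$ and $\sum_{i=0}^d f_i' N_i'(a)$ coincide, rather than matching the operators pointwise.

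Concretely, I would proceed by induction on $d$, or better, by re-deriving the left-evaluation identity directly from division in $R$ and recognizing it as the right-evaluation identity in $R'$. Writing $f(x) = \sum_{i=0}^d x^i f_i'$ in $R$ and $f'(x) = \sum_{i=0}^d f_i' x^i$ in $R'$, one can observe that the commutation rule $xa = \sigma(a)x + \delta(a)$ in $R$, read "from the right", is the mirror of the rule $x b = \sigma'(b) x + \delta'(b) = \sigma^{-1}(b)x - \delta(\sigma^{-1}(b))$ in $R'$: indeed, from $xa = \sigma(a)x+\delta(a)$ we get, setting $b = \sigma(a)$ so $a = \sigma^{-1}(b)$, that $x\sigma^{-1}(b) = bx + \delta(\sigma^{-1}(b))$, i.e.\ $bx = x\sigma^{-1}(b) - \delta(\sigma^{-1}(b))$. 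Thus the operation of dividing $f(x)$ on the left by $(x-a)$ in $R$ — peeling off powers of $x$ from the left and pushing them past coefficients — is carried out by exactly the same symbol manipulations as dividing $f'(x)$ on the right by $(x-a)$ in $R'$, with the roles of left/right coefficients and of $\sigma,\delta$ versus $\sigma^{-1}, -\delta\circ\sigma^{-1}$ interchanged. Tracking the remainder through this correspondence gives $f(a)_l = f'(a)_r$.

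The main obstacle I anticipate is bookkeeping, not depth: one must be scrupulous about which side coefficients sit on when rewriting $f(x) = \sum a_i x^i$ as $\sum x^i f_i'$, since the $f_i'$ are not the $a_i$ but are obtained by moving all the $x$'s to the left (this is why the statement bothers to introduce the $f_i'$ at all), and one must not conflate $M_i$ with $N_i'$ even though both sums evaluate to the same thing. I would therefore recommend the division-based argument over brute-force induction on the recursions, as it makes the left/right mirror symmetry structural rather than computational and sidesteps the mismatch between the $M$-recursion and the $\sigma'$-twisted $N$-recursion.
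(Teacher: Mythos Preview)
Your first stage, verifying that $\delta' = -\delta\circ\sigma^{-1}$ is a $\sigma'$-derivation so that $R'$ is a skew polynomial ring, is correct and matches what the paper does.

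In the second stage you have manufactured a phantom obstacle. You correctly derive that in $R'$ the recursion reads $N_{i+1}'(a) = \sigma^{-1}(N_i'(a))\,a - \delta(\sigma^{-1}(N_i'(a)))$ and compare it with $M_{i+1}(a) = a\,\sigma^{-1}(M_i(a)) - \delta(\sigma^{-1}(M_i(a)))$, but then assert that these sequences ``are genuinely different as functions'' because $a$ sits on opposite sides. This is false: the hypothesis is that $F$ is a \emph{field}, so multiplication in $F$ is commutative and $\sigma^{-1}(N_i'(a))\,a = a\,\sigma^{-1}(N_i'(a))$. The two recursions are therefore literally identical, giving $M_i(a) = N_i'(a)$ for every $i$, and (using commutativity once more to commute $f_i'$ past $M_i(a)$) the equality
\[
f(a)_l \;=\; \sum_{i=0}^{d} M_i(a)\, f_i' \;=\; \sum_{i=0}^{d} f_i'\, N_i'(a) \;=\; f'(a)_r
\]
is immediate. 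This is precisely the paper's argument.

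Your division-based mirror-symmetry route is not wrong in principle, but it is a detour around a difficulty that does not exist. The one-line commutativity observation is all that is required; the structural left/right correspondence you sketch would only be the appropriate tool if $F$ were a noncommutative division ring, which the statement explicitly excludes.
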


We see that this is because in $R'=F[x;\sigma',\delta']$, for $i\ge1$, we have \[N_{i+1}(a)=\sigma'(N_i(a))a+\delta'(N_i(a))=\sigma^{-1}(N_i(a))a-\delta(\sigma^{-1}(N_i(a))),\]
while in $R=F[x;\sigma,\delta]$, we have $M_{i+1}(a)=a\sigma^{-1}(M_i(a))-\delta(\sigma^{-1}(M_i(a)))$, and since multiplication in fields is commutative, these recurrence relations are identical, resulting in the equivalence between right evaluation in $R'$ and left evaluation in $R$ (the modified polynomial $f'(x)$ takes care of the requirement that the coefficients be on the left). That $R'$ is actually a skew polynomial ring is verified by checking that $\sigma'$ and $\delta'$ satisfy the required properties.

In fact, since for any polynomial $f'(x)\in R'$, we may find a polynomial \linebreak$f(x)\in R$ such that $f'(a)_r=f(a)_l$, we have that there is an equivalence between left evaluation in $R$ and right evaluation in $R'$.

\begin{remark}
In our work, we take $K=\F_{q^m}$, $\sigma$ to be the Frobenius automorphism, and  $\delta\equiv0$. That is, $\sigma(a)=a^q$ for all $a\in K$, so $\sigma^{-1}(a)=a^{q^{m-1}}$. Thus $\eqref{rec1}$ simplifies to
\begin{align*}
N_0(a)&=1
\\N_{i+1}(a)&=(N_i(a))^qa,
\end{align*}
the solution to which is $N_i(a)=a^{\frac{q^i-1}{q-1}}$. To write this more compactly, we introduce the notation $\llbracket i\rrbracket=\frac{q^i-1}{q-1}$, and so we may write $N_i(a)=a^{\llbracket i\rrbracket}$.

Likewise, the recursive formula for $M$ simplifies to
\begin{align*}
M_0(a)&=1
\\M_{i+1}(a)&=a(M_i(a))^{q^{m-1}},
\end{align*}
with the solution $M_i(a)=a^{\frac{q^{i(m-1)}-1}{q^{m-1}-1}}$. For brevity we shall also introduce the notation $\rrbracket i\llbracket=\frac{q^{i(m-1)}-1}{q^{m-1}-1}$ so we may write this as $a^{\rrbracket i\llbracket}$.
\end{remark}

If $f(x)=\sum_{i=0}^nf_ix^i$, then we may write
\[f(a)_r=\sum_{i=0}^nf_ia^{\llbracket i\rrbracket}.\]
Thus, if we let $\ov{f^r}(y)=\sum_{i=0}^nf_iy^{\llbracket i\rrbracket}\in\F_{q^m}[y]$, we have $f(a)_r=\ov{f^r}(a)$. In fact, this allows us to define a map:
\begin{align*}
\ov{\hspace{0.1in}^r}:\F_{q^m}[x;\sigma]&\to\F_{q^m}[y]\\
x^i&\mapsto y^{\llbracket i\rrbracket}
\end{align*}

This map is an injective linear map with respect to polynomial addition and scalar multiplication. For any $f(x)\in R$ it holds that for any $a\in\F_{q^m}$, \linebreak$f(a)_r=\ov{f^r}(a)$. This leads to the following definition.
\begin{definition}
	For any $f(x)\in\F_{q^m}[x;\sigma]$, the polynomial $\ov{f^r}(y)\in\F_{q^m}[y]$ is called the right evaluation polynomial of $f(x)$.
\end{definition}

Similarly, we may write
\[f(a)_l=\sum_{i=0}f_i'a^{\rrbracket i\llbracket},\]
and so we define a map:
\begin{align*}
\ov{\hspace{0.1in}^l}:\F_{q^m}[x;\sigma]&\to\F_{q^m}[y]\\
ax^i&\mapsto\sigma^{-i}(a)y^{\rrbracket i\llbracket}
\end{align*}
with the property that for any $f(x)\in R$ and any $a\in\F_{q^m}$, $f(a)_l=\ov{f^l}(a)$.

\begin{remark}
This is not a linear map due to the $\sigma^{-i}(a)$. However, it results in a similar definition as with right evaluation.
\end{remark}

\begin{definition}
	For any $f(x)\in\F_{q^m}[x;\sigma]$, the polynomial $\ov{f^l}(y)\in\F_{q^m}[y]$ is called the left evaluation polynomial of $f(x)$.
\end{definition}

At this point, we introduce a concept from $\cite{LamLeroy}$ necessary to evaluate the product of two polynomials. For any $a\in K$ and $c\in K^*$, we let
\[a^c=(\sigma(c)a+\delta(c))c^{-1},\]
which in the case of $K=\F_{q^m}$, $\sigma(a)=a^q$, and $\delta\equiv0$ simplifies to $a^c=ac^{q-1}$. This defines an equivalence relation on $\F_{q^m}$, where two elements $a,b\in K$ are $\sigma$-conjugates if there is some $c\in K^*$ such that $a^c=b$. As seen in $\cite{Matroid}$, $0$ is conjugate only with itself, and the size of the remaining conjugacy classes is $\llbracket m\rrbracket$, with $q-1$ distinct conjugacy classes.

For notation, we let $[a]$ denote the conjugacy class of $a\in\F_{q^m}$. That is, it is the set of all elements of $\F_{q^m}$ which are conjugate to $a$. We thus have $[0]=\{0\}$. Since $a$ and $b$ are $\sigma$-conjugates if there is some $c\in\F_{q^m}^*$ such that $ac^{q-1}=b$, we find that the class of $1$ is \[[1]=\{\alpha^0,\alpha^{q-1},\alpha^{2(q-1)},\dots,\alpha^{(\llbracket m\rrbracket-1)(q-1)}\},\]
which is the subgroup of $\F_{q^m}$ composed of the $q-1$ powers of elements of $\F_{q^m}^*$. Each of the remaining conjugacy classes is just the same elements multiplied by $\alpha^i$ for $i$ from $1$ to $q-2$. That is, \[[\alpha^i]=\{\alpha^i,\alpha^{q-1+i},\alpha^{2(q-1)+i},\dots,\alpha^{(\llbracket m\rrbracket-1)(q-1)+i}\},\]
and so the remaining $q-1$ classes are the cosets of the class of $1$.

Since right conjugacy is defined based on $\sigma$ and $\delta$, with $\delta\equiv0$, we have that $a,b\in\F_{q^m}$ are conjugate if there is some $c\in\F_{q^m}^*$ such that $a^c=b$. That is, $ac^{q-1}=b$. For left conjugacy, we instead use $\sigma^{-1}$, and so $a,b\in\F_{q^m}$ are conjugate if there is some $c\in\F_{q^m}$ such that $ac^{q^{m-1}-1}=b$.

If $a$ and $b$ are right conjugate, then there is some $c\in\F_{q^m}^*$ such that $ac^{q-1}=b$, and so if we let $d=c^{-q}$, then
\begin{align*}
ad^{q^{m-1}-1}&
=a(c^{-q})^{q^{m-1}-1}
=a(c^{-1})^{q^m}c^q
=ac^{q-1}
=b,
\end{align*}
and so $a$ and $b$ are left conjugate. Similarly, if $a$ and $b$ are left conjugate, we have some $d\in\F_{q^m}^*$ such that $ad^{q^{m-1}-1}=b$, and so if we take $c=d^{\frac{q^{m-1}-1}{q-1}}$, then we clearly have
\begin{align*}
ac^{q-1}&=a\left(d^{\frac{q^{m-1}-1}{q-1}}\right)^{q-1}
=ad^{q^{m-1}-1}
=b,
\end{align*}
and so $a$ and $b$ are right conjugate. This proves that two elements are right conjugate if and only if they are left conjugate, and so the left and right conjugacy classes are identical.

This concept of conjugacy allows us to formulate the following theorem, proved in $\cite{LamLeroy}$, which allows us to evaluate products of polynomials.

\begin{theorem}
	If $h(x)=f(x)g(x)$ where $f(x),g(x)\in R$, then for any $a\in K$, if $g(a)_r=0$, $h(a)_r=0$, but if $g(a)_r\ne0$, we have $h(a)_r=f(a^{g(a)_r})_rg(a)_r$.	
\end{theorem}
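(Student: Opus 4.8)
The plan is to reduce the statement to the case $f(x)=x^i$ by exploiting the left-linearity of right evaluation, and then to argue by induction on $i$. First I would establish a small lemma: for any polynomial $p(x)\in R$ and any $a\in K$,
\[
(xp(x))(a)_r=\sigma(p(a)_r)\,a+\delta(p(a)_r).
\]
This comes straight from the right division algorithm: writing $p(x)=q(x)(x-a)+p(a)_r$ and multiplying on the left by $x$, the rule $x\,p(a)_r=\sigma(p(a)_r)x+\delta(p(a)_r)$ together with $\sigma(p(a)_r)x=\sigma(p(a)_r)(x-a)+\sigma(p(a)_r)a$ lets us read off the new remainder. Note that the right-hand side is exactly one step of the recursion \eqref{rec1} applied to the element $p(a)_r$.

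Next I would prove, by induction on $i\ge 0$, that $(x^ig(x))(a)_r=N_i\!\left(a^{g(a)_r}\right)g(a)_r$ when $g(a)_r\ne 0$, and $(x^ig(x))(a)_r=0$ when $g(a)_r=0$. The base case $i=0$ is immediate since $(x^0g)(a)_r=g(a)_r$ and $N_0\equiv 1$. For the inductive step, set $c:=g(a)_r\ne 0$ and apply the lemma to $p(x)=x^ig(x)$, substituting the inductive hypothesis, to obtain $(x^{i+1}g)(a)_r=\sigma(N_i(a^c)c)\,a+\delta(N_i(a^c)c)$. On the other side, expanding $N_{i+1}(a^c)c=\bigl[\sigma(N_i(a^c))a^c+\delta(N_i(a^c))\bigr]c$ and using the identity $a^c c=\sigma(c)a+\delta(c)$ together with the $\sigma$-derivation (Leibniz) rule $\delta(uv)=\sigma(u)\delta(v)+\delta(u)v$, both expressions collapse to $\sigma(N_i(a^c)c)\,a+\delta(N_i(a^c)c)$, giving $(x^{i+1}g)(a)_r=N_{i+1}(a^c)c$. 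The case $c=0$ is easier: each step of the recursion sends $0$ to $\sigma(0)a+\delta(0)=0$, so $(x^ig)(a)_r=0$ for all $i$; alternatively one simply notes that $g(a)_r=0$ means $(x-a)\mid_r g(x)$, hence $(x-a)\mid_r h(x)$ and $h(a)_r=0$.

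Finally I would assemble the general claim. Writing $f(x)=\sum_{i=0}^d f_ix^i$ gives $h(x)=f(x)g(x)=\sum_{i}f_i\bigl(x^ig(x)\bigr)$, and because right evaluation is additive and commutes with left multiplication by constants of $K$, we get $h(a)_r=\sum_i f_i\,(x^ig(x))(a)_r$. If $g(a)_r=0$ each summand vanishes and $h(a)_r=0$; if $c:=g(a)_r\ne 0$, the induction yields $h(a)_r=\sum_i f_iN_i(a^c)\,c=\bigl(\sum_i f_iN_i(a^c)\bigr)c=f\!\left(a^{g(a)_r}\right)_r\,g(a)_r$.

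I expect the main obstacle to be the bookkeeping in the inductive step: one has to track $\sigma$ and $\delta$ carefully and invoke the Leibniz rule at precisely the right place, and one must be attentive to the fact that right evaluation is only \emph{left}-linear (additive, and compatible with left scalar multiplication but not right), which is exactly what makes the reduction to monomials legitimate. Everything else is routine manipulation with the right division algorithm and the recursion \eqref{rec1}.
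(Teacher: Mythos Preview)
Your argument is correct: the lemma $(xp)(a)_r=\sigma(p(a)_r)a+\delta(p(a)_r)$ follows directly from the right division algorithm, the induction step reduces cleanly via the identity $a^c c=\sigma(c)a+\delta(c)$ together with the Leibniz rule $\delta(uv)=\sigma(u)\delta(v)+\delta(u)v$, and the passage from monomials to general $f$ is justified because right evaluation is additive and left-$K$-linear.

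There is nothing to compare against in this paper, however: the theorem is stated without proof and attributed to \cite{LamLeroy}. Your approach is in fact essentially the argument given there (reduce to evaluating $f(x)c$ at $a$, then control how multiplication by $x$ on the left interacts with the conjugate $a^c$), so you have reconstructed the cited proof rather than offered an alternative route.
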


We now introduce the idea of left and right minimal polynomials for a set.
\begin{definition}
	The right minimal polynomial of a set $Z=\{a_1,a_2,\dots,a_n\}$, with $a_i\in K$ for $i\in\{1,\dots,n\}$ is the monic polynomial $\mu_{Z,r}(x)\in R$ of minimal degree such that $\mu_{Z,r}(a_i)_r=0$ for all $i\in\{1,\dots,n\}$. Similarly, the left minimal polynomial of $Z$ is the monic polynomial $\mu_{Z,l}(x)\in R$ of minimal degree such that $\mu_{Z,l}(a_i)_l=0$ for all $i\in\{1,\dots,n\}$.
\end{definition}

Using the formula for evaluation of products, we interpolate polynomials with a given set of roots. Let $a_1,a_2,\dots,a_n$ be elements of $K$. Let $f_1(x)=x-a_1$. Then clearly $f_1$ has $a_1$ as a root, since $f_1(a_1)_r=0$. Then for $2\le i\le n$, we calculate $c_i=f_{i-1}(a_i)_r$. If $c_i=0$, then we take $f_i(x)=f_{i-1}(x)$, but if $c_i\ne0$, we take \[f_i(x)=(x-a_i^{c_i})f_{i-1}(x)=(x-a_ic_i^{q-1})f_{i-1}(x).\numberthis\label{interpolate}\]
Since $(x-a_i^{c_i})$ evaluated at $a_i^{c_i}$ is $0$, we have that $f_i(a_i)_r=0$. By this construction, we will have that $f_i(x)$ has $a_1,a_2,\dots,a_i$ as roots. This can be continued up to $i=n$ to construct the polynomial $f_n(x)$ with all of $a_1,\dots,a_n$ as roots. The proof of this theorem can be found in $\cite{Matroid}$.

\begin{theorem}\label{interp}
The polynomial $f_n(x)$ constructed above is the right minimal polynomial $\mu_{Z,r}(x)$ for $Z=\{a_1,a_2,\dots,a_n\}$.
\end{theorem}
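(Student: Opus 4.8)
The plan is to prove the statement by induction on $n$, carrying as inductive hypothesis that the polynomial $f_i$ produced by the construction equals the right minimal polynomial $\mu_{Z_i,r}(x)$ of $Z_i=\{a_1,\dots,a_i\}$. At each stage three things must be checked: that $f_i$ is monic, that $f_i$ vanishes on the right at every element of $Z_i$, and that no monic polynomial of strictly smaller degree does so. Monicity is immediate, since $f_1=x-a_1$ is monic and, when $c_i\neq0$, $f_i=(x-a_i^{c_i})f_{i-1}$ is a product of monic polynomials and degrees add in $R$. The vanishing statement is essentially recorded in the construction: when $c_i=0$ we have $f_i=f_{i-1}$, which already kills $a_i$; when $c_i\neq0$, applying the product-evaluation theorem to $f_i=(x-a_i^{c_i})f_{i-1}$ with $f_{i-1}(a_i)_r=c_i\neq0$ gives $f_i(a_i)_r=\bigl(a_i^{c_i}-a_i^{c_i}\bigr)c_i=0$, while for $j<i$ the same theorem together with $f_{i-1}(a_j)_r=0$ gives $f_i(a_j)_r=0$.

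The real content is minimality, and for it I would first isolate the following divisibility fact: for any finite $Z\subseteq\F_{q^m}$ and any $g(x)\in R$, if $g(a)_r=0$ for every $a\in Z$, then $\mu_{Z,r}(x)\mid_r g(x)$. To prove this, write $g(x)=q(x)\mu_{Z,r}(x)+r(x)$ by right Euclidean division, with $\deg r<\deg\mu_{Z,r}$ or $r=0$. Because the map $f\mapsto\ov{f^r}$ is linear, right evaluation at a fixed point is additive, so $g(a)_r=\bigl(q\mu_{Z,r}\bigr)(a)_r+r(a)_r$; and since $\mu_{Z,r}(a)_r=0$ for $a\in Z$, the product-evaluation theorem forces $\bigl(q\mu_{Z,r}\bigr)(a)_r=0$. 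Hence $r(a)_r=g(a)_r=0$ for all $a\in Z$, and minimality of $\deg\mu_{Z,r}$ forces $r=0$, giving the claim.

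With this lemma in hand the induction closes cleanly. The base case $n=1$ holds because the only monic polynomial of degree $0$ is the constant $1$, whose right evaluation at $a_1$ is $1\neq0$, so $\deg\mu_{\{a_1\},r}=1=\deg f_1$ and, by uniqueness of the monic minimal-degree polynomial, $f_1=x-a_1=\mu_{\{a_1\},r}$. For the step, assume $f_{i-1}=\mu_{Z_{i-1},r}$. If $c_i=0$ then $f_i=f_{i-1}$ vanishes on $Z_i$; since any polynomial vanishing on $Z_i$ vanishes on $Z_{i-1}$ we get $\deg\mu_{Z_i,r}\ge\deg\mu_{Z_{i-1},r}=\deg f_i$, and $f_i$ attains this bound, so $f_i=\mu_{Z_i,r}$. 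If $c_i\neq0$, suppose for contradiction that some monic $g$ with $\deg g\le\deg f_{i-1}$ satisfies $g(a)_r=0$ for all $a\in Z_i$. Then $g$ vanishes on $Z_{i-1}$, so by the lemma and the inductive hypothesis $f_{i-1}\mid_r g$; writing $g=h f_{i-1}$ with $g\neq0$ forces $\deg h=0$, and monicity of $g$ and $f_{i-1}$ forces $h=1$, i.e.\ $g=f_{i-1}$. But then $g(a_i)_r=f_{i-1}(a_i)_r=c_i\neq0$, a contradiction. Hence $\deg\mu_{Z_i,r}=\deg f_{i-1}+1=\deg f_i$, and $f_i=\mu_{Z_i,r}$.

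The main obstacle is the divisibility lemma — specifically, recognizing that right evaluation at a point is additive (a consequence of the linearity of $\ov{\hspace{0.1in}^r}$ recorded above) so that it interacts correctly with both the product-evaluation theorem and right Euclidean division, and dealing with the edge cases ($r=0$, $g$ nonzero, and the standing assumption that $\sigma$ is an automorphism so all the left/right machinery is available). Once the lemma is established, the remainder is bookkeeping with degrees and the uniqueness of monic polynomials of minimal degree.
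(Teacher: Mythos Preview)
Your proof is correct. The induction is set up properly, the divisibility lemma (any $g$ vanishing on the right at all of $Z$ is right-divisible by $\mu_{Z,r}$) is the right tool, and its proof via right Euclidean division plus additivity of right evaluation and the product-evaluation theorem is sound. The case split on $c_i=0$ versus $c_i\neq0$ is handled cleanly, and the degree-and-monicity bookkeeping that pins down $g=f_{i-1}$ in the contradiction argument is valid in $R$ since a nonzero constant $c$ multiplied on the left into a monic polynomial gives leading coefficient $c$.

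As for comparison: the paper does not actually prove this theorem in the text; it defers to the reference \cite{Matroid} with the line ``The proof of this theorem can be found in \cite{Matroid}.'' So there is no in-paper argument to compare against. Your approach --- isolate the right-divisibility property of the minimal polynomial and run an induction on the construction --- is the standard and expected one, and is presumably close to what the cited source does. One small remark: you do not need the standing assumption that $\sigma$ is an automorphism, since everything you invoke (right Euclidean division, right evaluation, the product-evaluation formula) holds for arbitrary injective $\sigma$; the automorphism hypothesis is only needed for the left-sided theory.
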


\section{Structure of Roots}
\label{roots}
Now that the minimal polynomial has been defined, we define the closure of a set of elements.

\begin{definition}\label{d:independent}
	The right closure of $Z=\{a_1,a_2,\dots,a_n\}$, denoted $\ov{Z}^r$, is the set of all right roots of the right minimal polynomial of $Z$. That is,
	
	\[\ov{Z}^r=\{a\in\F_{q^m}\mid\mu_{Z,r}(a)_r=0\}.\]
	Likewise the left closure, $\ov{Z}^l$, of $Z$ is the set of all left roots of the left minimal polynomial of $Z$, meaning
	\[\ov{Z}^l=\{a\in\F_{q^m}\mid\mu_{Z,l}(a)_l=0\}.\]
\end{definition}

With these definitions in place, we define the independence of a set.

\begin{definition}\label{closure}
	A set $Z=\{a_1,a_2,\dots,a_n\}$ is called right independent if for any $i\in\{1,\dots,n\}$, $a_i\notin\ov{Z\setminus\{a_i\}}^r$. Similarly, $Z$ is left independent if $a_i\notin\ov{Z\setminus\{a_i\}}^l$ for all $i\in\{1,\dots,n\}$.
\end{definition}

We can also talk about the rank of a set.

\begin{definition}
	The right rank of a set $Z$ is the maximum cardinality of a right independent subset of $Z$, which is $\deg(\mu_{Z,r})$. Likewise, the left rank of the set $Z$ is the maximum cardinality of a left independent subset of $Z$, which is $\deg(\mu_{Z,l})$.
\end{definition}

We note that in general the left rank is not equal to the right rank.

\subsection{Matroidal Structure}
\label{matroid}
We define the sets that make up matroids for a skew polynomial ring.

\begin{theorem}
	Let $R=\F_{q^m}[x;\sigma]$ be a skew polynomial ring. Then let $\Is$ be the set of all right independent subsets of $\F_{q^m}$ and $\Js$ be the set of all left independent subsets of $\F_{q^m}$. Then $\Mr=(\F_{q^m},\Is)$ and $\Ml=(\F_{q^m},\Js)$ are matroids.
\end{theorem}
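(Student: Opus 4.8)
The plan is to verify the matroid independence axioms for $\Mr$ indirectly, through the right--closure operator $Z\mapsto\ov{Z}^r$ on the (finite) ground set $\F_{q^m}$, and then to deduce the statement for $\Ml$ from Corollary \ref{leftasright} rather than redoing the work. Recall the standard fact that if $\operatorname{cl}:2^{E}\to 2^{E}$ is a \emph{matroid closure operator} --- extensive ($X\subseteq\operatorname{cl}(X)$), monotone ($X\subseteq Y\Rightarrow\operatorname{cl}(X)\subseteq\operatorname{cl}(Y)$), idempotent ($\operatorname{cl}(\operatorname{cl}(X))=\operatorname{cl}(X)$), and subject to the Mac Lane--Steinitz exchange property, that $b\in\operatorname{cl}(X\cup\{a\})\setminus\operatorname{cl}(X)$ implies $a\in\operatorname{cl}(X\cup\{b\})$ --- then the sets $X$ with $x\notin\operatorname{cl}(X\setminus\{x\})$ for every $x\in X$ are exactly the independent sets of a matroid on $E$. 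By Definition \ref{closure}, those sets are precisely the members of $\Is$, so it suffices to show $Z\mapsto\ov{Z}^r$ is a matroid closure operator. (An equivalent route verifies submodularity of the rank $Z\mapsto\deg\mu_{Z,r}$, but routing through the closure operator keeps the computations lighter.)

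The first ingredient I would isolate is a divisibility lemma: for $f(x)\in R$ and $Z\subseteq\F_{q^m}$, one has $f(a)_r=0$ for all $a\in Z$ if and only if $\mu_{Z,r}(x)\mid_r f(x)$. The ``if'' direction is immediate from the product--evaluation theorem; for ``only if'', write $f=q\,\mu_{Z,r}+s$ with $\deg s<\deg\mu_{Z,r}$ by right Euclidean division, use linearity of right evaluation (recall $f(a)_r=\ov{f^r}(a)$ and that evaluation in $\F_{q^m}[y]$ is additive) to get $s(a)_r=0$ on all of $Z$, and conclude $s=0$ from the minimality of $\deg\mu_{Z,r}$. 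Granting this, extensivity is the definition of closure; monotonicity follows since $Z\subseteq W$ forces $\mu_{Z,r}\mid_r\mu_{W,r}$ and hence every right root of $\mu_{Z,r}$ is a right root of $\mu_{W,r}$, i.e. $\ov{Z}^r\subseteq\ov{W}^r$; and idempotence follows because $\mu_{Z,r}$ vanishes on $\ov{Z}^r\supseteq Z$, so $\mu_{Z,r}$ and $\mu_{\ov{Z}^r,r}$ are monic polynomials dividing each other on the right, whence equal, giving $\ov{\ov{Z}^r}^r=\ov{Z}^r$.

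The crux --- the step I expect to require genuine care --- is the exchange property, and here I would lean on Theorem \ref{interp} together with the product--evaluation theorem. If $a\in\ov{Z}^r$ then $\mu_{Z\cup\{a\},r}=\mu_{Z,r}$ and $\ov{Z\cup\{a\}}^r=\ov{Z}^r$, so the hypothesis $b\in\ov{Z\cup\{a\}}^r\setminus\ov{Z}^r$ cannot occur; thus we may assume $a\notin\ov{Z}^r$, so $c_a:=\mu_{Z,r}(a)_r\neq 0$ and, by the interpolation construction \eqref{interpolate}, $\mu_{Z\cup\{a\},r}(x)=(x-a^{c_a})\,\mu_{Z,r}(x)$. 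Given such a $b$, set $c_b:=\mu_{Z,r}(b)_r$, which is nonzero since $b\notin\ov{Z}^r$; the product--evaluation theorem then gives
\[
0=\mu_{Z\cup\{a\},r}(b)_r=\bigl(b^{c_b}-a^{c_a}\bigr)\,c_b,
\]
so $b^{c_b}=a^{c_a}$. Symmetrically $\mu_{Z\cup\{b\},r}(x)=(x-b^{c_b})\,\mu_{Z,r}(x)$, and evaluating at $a$ yields $\mu_{Z\cup\{b\},r}(a)_r=(a^{c_a}-b^{c_b})\,c_a=0$, i.e. $a\in\ov{Z\cup\{b\}}^r$. This completes the verification that $\Mr$ is a matroid.

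For $\Ml$, I would invoke Corollary \ref{leftasright} with $F=\F_{q^m}$: since $\sigma$ is an automorphism of $\F_{q^m}$, so is $\sigma^{-1}$, hence $R'=\F_{q^m}[x;\sigma^{-1}]$ is a skew polynomial ring, and the assignment $f(x)=\sum_i x^i f_i'\mapsto f'(x)=\sum_i f_i'x^i$ is a degree--preserving bijection $R\to R'$ with $f(a)_l=f'(a)_r$ for all $a\in\F_{q^m}$; using $f_i'=\sigma^{-i}(f_i)$ one checks it also preserves leading coefficients, so it sends monic polynomials to monic polynomials. Consequently it carries $\mu_{Z,l}$ (in $R$) to the right minimal polynomial of $Z$ in $R'$, left closures in $R$ to right closures in $R'$, and $\Js$ onto the family of right--independent subsets of $\F_{q^m}$ for $R'$. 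That family is the independent--set family of a matroid on $\F_{q^m}$ by the first part applied to $R'$, so $\Ml=(\F_{q^m},\Js)$ is a matroid as well. The main obstacles are thus concentrated in the exchange property (handled above) and in the bookkeeping that the $R\leftrightarrow R'$ correspondence is faithful enough to transport minimal polynomials, closures, and independence.
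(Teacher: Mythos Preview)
Your argument is correct. For $\Ml$ you do exactly what the paper does: invoke Corollary~\ref{leftasright} so that left evaluation in $R$ becomes right evaluation in $R'=\F_{q^m}[x;\sigma^{-1}]$, and then inherit the matroid structure from the right case applied to $R'$. For $\Mr$, however, the paper gives no argument at all --- it simply cites \cite{Matroid} --- whereas you supply a self-contained proof by verifying that $Z\mapsto\ov{Z}^r$ satisfies the closure-operator axioms. Your handling of the Mac Lane--Steinitz exchange is the substantive step, and the combination of the interpolation identity $\mu_{Z\cup\{a\},r}(x)=(x-a^{c_a})\,\mu_{Z,r}(x)$ (justified via Theorem~\ref{interp} and \eqref{interpolate}, tacitly using that the interpolation may be run in any order) with the product-evaluation theorem to force $b^{c_b}=a^{c_a}$, and then symmetry, is exactly the right mechanism and uses only machinery the paper has already developed. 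The payoff of your route is that the statement becomes internal to the paper rather than deferred to an external reference; the cost is a page of routine verification the paper chose to outsource.
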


That $\Mr$ is a a matroid is proven in $\cite{Matroid}$. Since there is an equivalence between left evaluation in $R$ and right evaluation in $R'$, the left matroid $\Ml$ of $R$ is just the right matroid $\Mr$ of $R'$, and so the same method of proof applies to it.

With this structure in place, the independent sets of the matroid are the independent sets defined earlier. Likewise, the rank function of a set of elements in the matroid is the rank of the set of elements in $R$. The closures of the matroids are closures of the sets of elements in $R$, and finally we have that the flats for the matroids are exactly those sets that are equal to their closures. That is, for $\Mr$, the flats are all subsets $Z\subset\F_{q^m}$ such that $\ov{Z}^r=Z$, and for $\Ml$, the flats are all subsets $Z\subset\F_{q^m}$ such that $\ov{Z}^l=Z$.

\subsection{Matroid Isomorphism}
\label{isomorphism}
In general, $\Is\ne\Js$, and so $\Mr$ and $\Ml$ are not equal. When $m=2$, then $\sigma^{-1}=\sigma$, and so the left and right closures are the same. Since dependency may be defined based on closures, right and left dependencies are the same and so $\Is=\Js$ when $m=2$. If $m\ne2$, then necessarily $\sigma^{-1}\ne\sigma$, so we can find closures that are different and thus a set that is right independent but not left independent, and so $\Is\ne\Js$. However, we show that $\Mr$ and $\Ml$ are isomorphic.

First, note that for any $a\in\F_{q^m}$, we may consider the class $[a]$ and the set $\Is|[a]$ of sets $X\in\Is$ such that $X\subseteq[a]$. We have that $[a]_r=([a],\Is|[a])$ is a sub-matroid of $\Mr$. Similarly, $[a]_l=([a],\Js|[a])$ is a sub-matroid of $\Ml$. Since the conjugacy classes are $[0]$ and $[\alpha^i]$ for $0\le i\le q-2$, there are $q$ distinct right sub-matroids: $[0]_r$ and $[\alpha^i]_r$ for $0\le i\le q-2$. There are also $q$ left sub-matroids: $[0]_l$ and $[\alpha^i]_l$ for $0\le i\le q-2$. We will construct the isomorphism from $\Mr$ to $\Ml$ from isomorphisms between these.

\begin{proposition}
For any $0\le i\le q-2$, Let $\gamma_i:[1]\to[\alpha^i]$  be defined by $\gamma_i(a)=\alpha^ia$. Then $\gamma_i$ is a matroid isomorphism from $[1]_r$ to $[\alpha^i]_r$ and $[1]_l$ to $[\alpha^i]_l$.
\end{proposition}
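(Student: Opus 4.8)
The plan is to certify $\gamma_i$ as a matroid isomorphism by checking it is a bijection of ground sets that preserves the rank function, on both sides. Since $[\alpha^i]$ is exactly the coset $\alpha^i[1]$ of the multiplicative subgroup $[1]\le\F_{q^m}^*$, multiplication by $\alpha^i$ is automatically a bijection $[1]\to[\alpha^i]$, so all the content lies in rank–preservation. Moreover, a set $X$ is right independent precisely when $\rank_r(X)=|X|$ (and $\rank_r(X)=\deg\mu_{X,r}$), and $|\gamma_i(X)|=|X|$; hence it suffices to prove $\rank_r(\gamma_i(X))=\rank_r(X)$ for every $X\subseteq[1]$, and likewise $\rank_l(\gamma_i(X))=\rank_l(X)$.

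The tool I would use is a coefficient ``twist''. For $\lambda\in\F_{q^m}^*$ the assignment $x\mapsto\lambda x$, together with the identity on $\F_{q^m}$, extends to a ring automorphism $\psi_\lambda$ of $R=\F_{q^m}[x;\sigma]$; concretely, since $(\lambda x)^j=\lambda^{\llbracket j\rrbracket}x^j$, one has $\psi_\lambda(\sum_j f_jx^j)=\sum_j f_j\lambda^{\llbracket j\rrbracket}x^j$, so $\psi_\lambda$ merely rescales each coefficient by a nonzero constant, hence preserves degree and is invertible with inverse $\psi_{\lambda^{-1}}$. Comparing with the right evaluation formula $f(a)_r=\sum_j f_ja^{\llbracket j\rrbracket}$ gives the key identity $\psi_\lambda(f)(a)_r=f(\lambda a)_r$ for all $a\in\F_{q^m}$. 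Taking $\lambda=\alpha^{-i}$ and setting $g:=\psi_{\alpha^{-i}}(\mu_{X,r})$, we get $\deg g=\deg\mu_{X,r}=\rank_r(X)$ and $g(\alpha^i a)_r=\mu_{X,r}(a)_r=0$ for every $a\in X$, so $g$ vanishes on the right at every point of $\gamma_i(X)=\alpha^iX$; therefore $\rank_r(\alpha^iX)=\deg\mu_{\alpha^iX,r}\le\deg g=\rank_r(X)$. Applying the same construction with $-i$ in place of $i$ and $\alpha^iX$ in place of $X$ (note $\alpha^{-i}(\alpha^iX)=X$) yields the reverse inequality, so $\rank_r(\gamma_i(X))=\rank_r(X)$, and $\gamma_i$ is an isomorphism $[1]_r\to[\alpha^i]_r$.

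For the left case I would run the identical argument with the left evaluation polynomial in place of the right one: writing $\mu_{X,l}=\sum_j x^jf_j'$, twist its right-hand coefficients to form $g=\sum_j x^j(\alpha^{-i\rrbracket j\llbracket}f_j')$. The only fact used about the exponents is that $(\alpha^i a)^{\rrbracket j\llbracket}=\alpha^{i\rrbracket j\llbracket}a^{\rrbracket j\llbracket}$, which holds because $\rrbracket j\llbracket=\frac{q^{j(m-1)}-1}{q^{m-1}-1}$ is a genuine integer; this makes $g$ a polynomial of the same degree vanishing on the left at every point of $\alpha^iX$, and the same two-sided inequality gives $\rank_l(\gamma_i(X))=\rank_l(X)$. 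Alternatively one can invoke Corollary~\ref{leftasright}: the left matroid of $R$ is the right matroid of $R'=\F_{q^m}[x;\sigma^{-1}]$, whose $\sigma^{-1}$–conjugacy classes were shown to coincide with the conjugacy classes of $R$, so the left statement is exactly the right statement applied to $R'$ (the argument above used nothing about $\sigma$ beyond its being an automorphism with $\delta\equiv0$). Either way, combining with the bijectivity from the first paragraph, $\gamma_i$ is a rank–preserving bijection and hence a matroid isomorphism $[1]_r\to[\alpha^i]_r$ and $[1]_l\to[\alpha^i]_l$; in particular it carries the flat $\ov X^r$ to $\ov{\gamma_i(X)}^r$, and similarly on the left.

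The point I expect to require the most care is getting \emph{equality} of ranks rather than merely $\rank_r(\gamma_i(X))\le\rank_r(X)$ — this is precisely why the coefficient twist must be recognized as an \emph{invertible} operation, so that the degree bound can be applied symmetrically in both directions rather than only once. A secondary subtlety is that the left evaluation map $\ov{\phantom{x}^l}$ is not linear, yet it still transforms correctly under scaling the argument because the exponents $\rrbracket j\llbracket$ are honest integers; and one should note for consistency that $Z\subseteq[1]$ forces $\ov Z^r\subseteq[1]$, so that $[1]_r$ and $[\alpha^i]_r$ really are the sub-matroids at issue, though this is already part of the conjugacy-class structure recalled earlier.
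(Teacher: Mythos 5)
Your proposal is correct and takes essentially the same route as the paper: you twist the coefficients of the minimal polynomial by $\alpha^{-i\llbracket j\rrbracket}$ (resp.\ $\alpha^{-i\rrbracket j\llbracket}$ on the left) and use the evaluation identity $\psi_{\alpha^{-i}}(f)(\alpha^i a)_r=f(a)_r$, which is exactly the paper's construction of $g(x)$. Your explicit two-sided degree comparison via the invertibility of the twist is precisely the step the paper compresses into ``similar work shows,'' so it fills in a detail rather than giving a genuinely different argument.
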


\begin{proof}
We note that if $a\in[1]$, then there is some $c\in\F_{q^m}^*$ such that $1c^{q-1}=a$, and so $\alpha^ic^{q-1}=\alpha^ia$, which means $\gamma_i(a)\in[\alpha^i]$, and $\gamma_i$ is indeed a function from $[1]$ to $[\alpha^i]$. Note that if $\gamma_i(a)=\gamma_i(b)$, then $\alpha^ia=\alpha^ib$, and so $a=b$, which means $\gamma_i$ is one-to-one. Since $[1]$ and $[\alpha^i]$ are finite sets of the same size, $\gamma_i$ is therefore a bijection.

Now consider any set $Z\subset[1]$. Let $\mu_{Z,r}(x)$ be the minimal polynomial of $Z$ on the right. Then if $\mu_{Z,r}(x)=\sum_{j=1}^df_jx^j,$ let $g(x)=\sum_{j=1}^df_j\alpha^{-i\llbracket j\rrbracket}x^j$. For any $a\in Z$, we have
\begin{align*}
g(\alpha^ia)_r&=\sum_{j=1}^df_j\alpha^{-i\llbracket j\rrbracket}(\alpha^ia)^{\llbracket j\rrbracket}
=\sum_{j=1}^df_ja^{\llbracket j\rrbracket}
=\mu_{Z,r}(a)_r
=0,
\end{align*}
and so $\gamma_i(Z)$ are roots of $g(x)$. Similar work shows that if there were some polynomial $g'(x)$ of lesser degree with all of $\gamma_i(Z)$ as roots, it could be transformed into a polynomial with all of $Z$ as right roots and a lower degree than $\mu_{Z,r}(x)$, so $g(x)$ is the minimal polynomial of $\gamma_i(Z)$. Since $Z$ is right independent if and only if $\deg(\mu_{Z,r})=|Z|$ and $\deg(\mu_{Z,r})=\deg(\mu_{\gamma_i(Z),r})=\deg(g)$, we have that $\deg(g)=|Z|=|\gamma_i(Z)|$ if and only if $Z$ is right independent, which means $\gamma_i(Z)$ is right independent if and only if $Z$ is right independent.

Using the same steps, but by transforming the polynomial with $\alpha^{i\rrbracket j\llbracket}$ instead, we also find that $\gamma_i(Z)$ is left independent if and only if $Z$ is left independent. Thus, $[1]_r$ is isomorphic to $[\alpha^i]_r$ via the isomorphism $\gamma_i$, and $[1]_l$ is isomorphic to $[\alpha^i]_l$ via $\gamma_i$.
\end{proof}

We prove some important incremental results that allow us to construct another isomorphism. The first is largely a rephrasing of Lemma 1 from $\cite{Matroid}$:

\begin{lemma}\label{rclose}
Let $Z=\{a_1,\dots,a_n\}\subseteq[1]$ and $b_1,\dots,b_n\in\F_{q^m}$ such that $a_i=b_i^{q-1}$ for $i\in\{1,\dots,n\}$. Then
\[\ov{Z}^r=\left\{\left.\left(\sum_{j=1}^nc_jb_j\right)^{q-1}\right| c_j\in\F_q\right\}\setminus\{0\}.\]
\end{lemma}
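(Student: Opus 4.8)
The plan is to identify $\ov{Z}^r$ with the set of right roots of $\mu_{Z,r}(x)$ and to reduce the computation to understanding which $c \in \F_{q^m}^*$ satisfy $\mu_{Z,r}(c^{q-1})_r = 0$. Since every nonzero element of $[1]$ has the form $c^{q-1}$ for some $c \in \F_{q^m}^*$, and $0 \in \ov{Z}^r$ trivially (the recursion gives $N_i(0) = 0$ for $i \ge 1$, so $\mu_{Z,r}(0)_r$ equals the constant term, which is $0$ because $\mu_{Z,r}$ has $a_1 \ne 0$ as a root forcing a nontrivial factorization; more carefully, one checks the constant term vanishes since $Z \subseteq [1]$), the claim will follow once I show that $\mu_{Z,r}(c^{q-1})_r = 0$ if and only if $c$ lies in the $\F_q$-span of $b_1, \dots, b_n$ inside $\F_{q^m}$.

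First I would record the shape of $\mu_{Z,r}(x)$. Writing $\mu_{Z,r}(x) = \sum_j f_j x^j$, the right evaluation polynomial is $\ov{\mu_{Z,r}^r}(y) = \sum_j f_j y^{\llbracket j\rrbracket} = \sum_j f_j y^{(q^j-1)/(q-1)}$. The key observation is that after the substitution $y = z^{q-1}$, we get $\ov{\mu_{Z,r}^r}(z^{q-1}) = \sum_j f_j z^{q^j - 1}$, so that $z \cdot \ov{\mu_{Z,r}^r}(z^{q-1}) = \sum_j f_j z^{q^j}$ is a $q$-linearized (additive) polynomial in $z$. Call it $L(z)$. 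Then for $c \in \F_{q^m}^*$ we have $\mu_{Z,r}(c^{q-1})_r = 0 \iff L(c) = 0$, and $L$ is $\F_q$-linear, so its root set in $\F_{q^m}$ is an $\F_q$-subspace $V$. By hypothesis $a_i = b_i^{q-1}$ and $\mu_{Z,r}(a_i)_r = 0$, so each $b_i \in V$; hence $\operatorname{span}_{\F_q}\{b_1,\dots,b_n\} \subseteq V$. For the reverse inclusion I would use a dimension count: $\deg_z L(z) = q^{\deg \mu_{Z,r}} \le q^n$, so $\dim_{\F_q} V \le n$; meanwhile Theorem~\ref{interp} and the independence machinery, together with Lemma~1 of \cite{Matroid} that this lemma rephrases, should guarantee that when $Z$ is taken to be a spanning/independent configuration the $b_i$ can be chosen $\F_q$-independent, pinning $\dim V = n$ exactly — but in the general (possibly dependent) case I instead argue that $\ov{Z}^r = \ov{Z'}^r$ for a maximal right-independent subset $Z' \subseteq Z$, for which the corresponding $b_i$'s are independent, and the right-hand side of the claimed identity is unchanged because the $\F_q$-span only depends on $\operatorname{span}_{\F_q}\{b_i\}$ which is the same for $Z$ and $Z'$ once one checks the $b_i$ corresponding to $Z \setminus Z'$ already lie in the span of the rest (this follows from $a_i \in \ov{Z'}^r$ via the forward inclusion applied to $Z'$).

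Finally I would assemble the pieces: $\ov{Z}^r \setminus \{0\} = \{c^{q-1} : c \in V \setminus \{0\}\} = \{(\sum_j c_j b_j)^{q-1} : c_j \in \F_q\} \setminus \{0\}$, and adjoining $0$ to both sides gives the stated equality (note $(\sum c_j b_j)^{q-1}$ with all $c_j = 0$ would be $0^{q-1}$, which is why the "$\setminus \{0\}$" appears on the right and $0$ is handled separately). The main obstacle I anticipate is the reverse inclusion / dimension count: one must be careful that $\deg \mu_{Z,r}$ really equals the right rank of $Z$ and that this matches $\dim_{\F_q} \operatorname{span}\{b_i\}$ in the independent case — i.e., that the map $b \mapsto b^{q-1}$ sends an $\F_q$-basis of a subspace to a right-independent set of the same cardinality. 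This is exactly the content being imported from \cite{Matroid}, so the proof should mostly consist of the linearization trick above plus a citation, rather than a from-scratch argument; the write-up should make clear which direction is immediate and which leans on the prior lemma.
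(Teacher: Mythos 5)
Your overall route is the same as the paper's: substitute $y=z^{q-1}$ into the right evaluation polynomial and multiply by $z$ to get the $q$-linearized polynomial $L(z)=\sum_j f_jz^{q^j}$, note that its root set $V$ is an $\F_q$-subspace containing every $b_i$, and close the reverse inclusion by the count that $L$ has at most $q^{\deg\mu_{Z,r}}$ roots while a maximal right-independent subset of $Z$ supplies that many $\F_q$-independent $b$'s. However, there are genuine problems in the execution. First, your claim that $0\in\ov{Z}^r$ is false: every $a_i\in[1]$ is nonzero, the interpolation \eqref{interpolate} builds $\mu_{Z,r}$ as a product of factors $x-a_ic_i^{q-1}$ with nonzero constant terms, and the constant term of a product in $R$ is the product of the constant terms, so $\mu_{Z,r}(0)_r=f_0\ne0$ and $0\notin\ov{Z}^r$. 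That is precisely why the lemma's right-hand side carries the ``$\setminus\{0\}$''; your final step of ``adjoining $0$ to both sides'' would assert an equality that contradicts the statement you are proving. Second, your reduction of the lemma to deciding which elements $c^{q-1}$ are roots silently assumes $\ov{Z}^r\subseteq[1]$: elements of the form $c^{q-1}$ with $c\in\F_{q^m}^*$ exhaust only the class $[1]$, not all of $\F_{q^m}^*$, so you must rule out right roots of $\mu_{Z,r}$ in other conjugacy classes. The paper does this by invoking the fact (Proposition 2.6 of \cite{Wedderburn}) that every element of $\ov{Z}^r$ is conjugate to some element of $Z$; your write-up never addresses it.

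Third, the step you yourself flag as the main obstacle --- that right independence of the $a_i$ forces $\F_q$-independence of the $b_i$ --- you propose to discharge by citing Lemma 1 of \cite{Matroid}, which is essentially the statement being proved; the paper instead gets it in one line from the already-established forward inclusion: if $b_{k_s}$ lay in the $\F_q$-span of $b_{k_1},\dots,b_{k_{s-1}}$, then $a_{k_s}=b_{k_s}^{q-1}$ would be a right root of the minimal polynomial of $\{a_{k_1},\dots,a_{k_{s-1}}\}$, contradicting the construction. Relatedly, your justification that the $b_i$ coming from $Z\setminus Z'$ lie in the span of the rest ``via the forward inclusion applied to $Z'$'' is backwards: that containment is an instance of the reverse inclusion for $Z'$ (equivalently, of $V$ being exactly the span of the $Z'$-part), not of the forward one. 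All of these are repairable with short arguments already implicit in your set-up, but as written the treatment of $0$, the missing containment $\ov{Z}^r\subseteq[1]$, and the circular citation are real gaps.
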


\begin{proof}
If we let $b=\sum_{j=1}^nc_jb_j$, where $c_j\in\F_q$ for $1\le j\le n$, then if $\mu_{Z,r}(x)=\sum_{i=1}^df_ix^i$ we write
\begin{align*}
b\mu_{Z,r}(b^{q-1})_r
&=b\sum_{i=1}^df_i(b^{q-1})^{\frac{q^i-1}{q-1}}
=\sum_{i=1}^df_i\left(\sum_{j=1}^nc_jb_j\right)^{q^i}\numberthis\label{linear1}\\
&=\sum_{j=1}^nc_jb_j\sum_{i=1}^df_i\left(b_j^{q-1}\right)^{\frac{q^i-1}{q-1}}
=\sum_{j=1}^nc_j\alpha^{k_j}\ov{\mu}_{Z,r}^r(a_j)
=0,
\end{align*}

where $k_j$ is such that $\alpha^{k_j}$ is the representation of $b_j$ in terms of the primitive element. The third equality holds since $c_j\in\F_q$, which means $c_j^{q^i}=c_j$.

As long as $b\ne 0$, this means that $b^{q-1}$ is a right root of $\mu_{Z,r}(x)$, and so $b^{q-1}=\left(\sum_{j=1}^nc_jb_j\right)^{q-1}$ is in the right closure of $Z$.

For the other direction, note first that by Proposition 2.6 in $\cite{Wedderburn}$, any element in $\ov{Z}^r$ must be conjugate to some $a_i\in Z$, and so must be in $[1]$. Suppose that $\deg(\mu_{Z,r}(x))=d$ as above. Then in constructing $\mu_{Z,r}$ by Formula $\eqref{interpolate}$, we will find a set of $d$ right independent elements $B_Z=\{a_{k_1},\dots,a_{k_d}\}\subseteq Z$, and by construction, $\mu_{B_Z,r}(x)=\mu_{Z,r}(x)$. Also by construction, we have that $a_{k_s}$ is not a root of the right minimal polynomial of $\{a_{k_1},\dots,a_{k_{s-1}}\}$. This in turn means that $b_{k_s}$ must not be an $\F_q$-linear combination of $\{b_{k_1},\dots,b_{k_{s-1}}\}$, or as established above, $a_{k_s}=b_{k_s}^{q-1}$ would have to be a root of the minimal polynomial, a contradiction. Thus, we have that $\{b_{k_1},\dots,b_{k_d}\}$ is $\F_q$-linearly independent and so spans a vector space of size $q^d$ that are solutions to $\sum_{i=1}^df_ix^{q^i}=0$ by $\eqref{linear1}$.

There are only $q^d$ solutions of $\sum_{i=1}^df_ix^{q^i}=0$, and so all solutions must be \linebreak$\F_q$-linear combinations of $\{b_{k_1},\dots,b_{k_d}\}$. Since for any $a\in[1]$ there is some $b\in\F_{q^m}^*$ such that $a=b^{q-1}$, if $a$ is a root of $\mu_{Z,r}(x)$, we could work backwards to $\eqref{linear1}$ to find that $b$ is a solution of $\sum_{i=1}^df_ix^{q^i}=0$, and so must be of the form
$b=\sum_{j=1}^nc_jb_j$, which finally means $a=\left(\sum_{j=1}^nc_jb_j\right)^{q-1}$, and we have the other containment.
\end{proof}

\begin{lemma}\label{lclose}
Let $Z=\{a_1,\dots,a_n\}\subseteq[1]$ and $b_1,\dots,b_n\in\F_{q^m}$ such that \linebreak$a_i=b_i^{q^{m-1}-1}$ for $i\in\{1,\dots,n\}$. Then
\[\ov{Z}^l=\left\{\left.\left(\sum_{j=1}^nc_jb_j\right)^{q^{m-1}-1}\right| c_j\in\F_q\right\}\setminus\{0\}.\]
\end{lemma}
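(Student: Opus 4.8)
The plan is to run the proof of Lemma~\ref{rclose} on the left side --- equivalently, by Corollary~\ref{leftasright}, to run it for right evaluation in $R'=\F_{q^m}[x;\sigma^{-1}]$, whose automorphism is $a\mapsto a^{q^{m-1}}$ and whose conjugation formula is therefore $a^{c}=ac^{q^{m-1}-1}$, which is exactly why the class $[1]$ must here be described through $(q^{m-1}-1)$-th powers. Write $\mu_{Z,l}(x)=\sum_i\mu_ix^i$ and recall $\mu_{Z,l}(a)_l=\sum_i\sigma^{-i}(\mu_i)\,a^{\rrbracket i\llbracket}$ with $\rrbracket i\llbracket=\frac{q^{i(m-1)}-1}{q^{m-1}-1}$. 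For the inclusion of the right-hand set into $\ov Z^l$, put $b=\sum_{j=1}^{n}c_jb_j$ with $c_j\in\F_q$; then, just as in \eqref{linear1},
\[
b\,\mu_{Z,l}\bigl(b^{\,q^{m-1}-1}\bigr)_l=\sum_i\sigma^{-i}(\mu_i)\,b^{\,q^{i(m-1)}},
\]
and because $t\mapsto t^{q^{m-1}}$ is $\F_q$-linear on $\F_{q^m}$ (its fixed subfield inside $\F_{q^m}$ is $\F_{q^{m-1}}\cap\F_{q^m}=\F_q$), the right-hand side expands termwise to $\sum_jc_jb_j\,\mu_{Z,l}(a_j)_l=0$; hence every nonzero $\bigl(\sum_jc_jb_j\bigr)^{q^{m-1}-1}$ is a left root of $\mu_{Z,l}$, i.e.\ lies in $\ov Z^l$. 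This inclusion is a routine transcription of the right-hand case.

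For the reverse inclusion I would again follow Lemma~\ref{rclose}: use the left analogue of Proposition~2.6 of \cite{Wedderburn} (apply it in $R'$) to get $\ov Z^l\subseteq[1]$, so any $a\in\ov Z^l$ is of the form $a=b^{q^{m-1}-1}$; run the left analogue of the interpolation of Theorem~\ref{interp} (formula \eqref{interpolate} in $R'$) to peel off a left-independent set $\{a_{k_1},\dots,a_{k_d}\}\subseteq Z$ with $\mu_{\{a_{k_1},\dots,a_{k_d}\},l}=\mu_{Z,l}$ and $d=\deg\mu_{Z,l}$; and apply the inclusion just proved to partial sets to conclude that $b_{k_1},\dots,b_{k_d}$ are $\F_q$-linearly independent. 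Then $S=\bigl\{(\sum_s c_sb_{k_s})^{q^{m-1}-1}:c_s\in\F_q\bigr\}\setminus\{0\}$ has exactly $\llbracket d\rrbracket$ elements and is contained in $\ov Z^l$, so the lemma reduces to the single inequality $|\ov Z^l|\le\llbracket d\rrbracket$.

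That inequality is the step I expect to be the real obstacle, and the point at which the left case is not a mechanical copy of the right case. On the right, $|\ov Z^r|\le\llbracket d\rrbracket$ is immediate because the linearized polynomial $\sum_i\mu_ix^{q^i}$ has honest degree exactly $q^d$; on the left the corresponding polynomial $L(x)=\sum_i\sigma^{-i}(\mu_i)x^{q^{i(m-1)}}$ has degree $q^{d(m-1)}$, and even after reducing $L$ modulo $x^{q^m}-x$ its degree is typically $q^{m-1}$, so counting roots by degree fails badly once $d\ge2$. To bound $|\ov Z^l|$ I would instead appeal to the Wedderburn-polynomial theory over division rings (as in \cite{Wedderburn}, hence valid in $R'$): being a product of linear factors, $\mu_{Z,l}$ is a Wedderburn polynomial, so its root set inside the single conjugacy class $[1]$ is a ``full'' algebraic set of rank $d=\deg\mu_{Z,l}$, which over the finite field $\F_{q^m}$ has exactly $\llbracket d\rrbracket$ elements; granting this, $|\ov Z^l|=\llbracket d\rrbracket=|S|$ and $\ov Z^l=S$. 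Lacking a citable statement precise enough for $R'$, I would instead prove $|\ov Z^l|\le\llbracket d\rrbracket$ by induction on $|Z|$: the left-dependent case collapses at once (after deleting a dependent element the inductive hypothesis both identifies $\ov Z^l$ and shows the deleted $b_j$ was already in the $\F_q$-span of the others), while in the left-independent case any $a\in\ov Z^l\setminus Z$ lies in a circuit $C\subseteq Z\cup\{a\}$, and if $C\subsetneq Z\cup\{a\}$ then $a$ lies in the closure of a proper, smaller subset and induction finishes. The residual case $C=Z\cup\{a\}$ is exactly where the naive induction stalls and where one must use the finer structure of the roots of $\mu_{Z,l}$ --- for instance the factorization $x^m-1=\mu_{Z,l}(x)q'(x)$ in $R$, which constrains the $\F_q$-linear operator $L$ on $\F_{q^m}$ --- to rule out $\ov Z^l$ being strictly larger than $S$; I expect verifying this last bound to be the most delicate part of the whole argument.
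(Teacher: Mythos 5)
Your forward inclusion is exactly the paper's: expand $b\,\mu_{Z,l}(b^{q^{m-1}-1})_l$ into $\sum_{i=1}^d f_i'\,b^{q^{i(m-1)}}$ (with $f_i'=\sigma^{-i}(\mu_i)$ the right-hand coefficients), use the $\F_q$-linearity of $t\mapsto t^{q^{i(m-1)}}$ to split over the $c_jb_j$, and conclude that every nonzero $\bigl(\sum_jc_jb_j\bigr)^{q^{m-1}-1}$ is a left root; this matches display \eqref{linear2}. Your setup for the reverse inclusion (that $\ov{Z}^l\subseteq[1]$ via the left analogue of Proposition 2.6 of \cite{Wedderburn} in $R'$, and the extraction of $d=\deg\mu_{Z,l}$ elements whose associated $b_{k_i}$ are $\F_q$-linearly independent via the interpolation construction) is also what the paper does, so the lemma indeed reduces to the count $|\ov{Z}^l|\le\llbracket d\rrbracket$, i.e.\ to showing that $\sum_{i=1}^d f_i'x^{q^{i(m-1)}}=0$ has at most $q^d$ solutions in $\F_{q^m}$.

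That count is where you have a genuine gap: you identify it as the crux but never prove it. Your appeal to Wedderburn-polynomial theory is left at the level of ``lacking a citable statement precise enough,'' and your fallback induction stalls, by your own admission, in the residual case where $Z\cup\{a\}$ is a circuit --- so the hard containment $\ov{Z}^l\subseteq\bigl\{(\sum_jc_jb_j)^{q^{m-1}-1}\bigr\}\setminus\{0\}$ is not established. The paper closes precisely this step with one citation: $\sum_{i=1}^d f_i'x^{q^{i(m-1)}}$ is an $(m-1)$-linearized polynomial, and by Theorem 4 of \cite{Matroid} such a polynomial of $q^{m-1}$-degree $d$ has only $q^d$ roots in $\F_{q^m}$. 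The relevant point, which your objection about ordinary degree misses, is that since $\gcd(m-1,m)=1$ the map $t\mapsto t^{q^{m-1}}$ is again a generator of $\operatorname{Gal}(\F_{q^m}/\F_q)$, so the root set is an $\F_q$-subspace whose dimension is bounded by the $q^{m-1}$-degree $d$, not by the ordinary degree $q^{d(m-1)}$; naive degree counting is simply the wrong tool, not evidence that the left case needs a structurally new argument. With that theorem quoted (or reproved), the rest of your outline goes through and the left case really is the mechanical transcription of Lemma \ref{rclose} that the paper asserts; without it, your proof is incomplete at its decisive step.
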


\begin{proof}
If we let $b=\sum_{j=1}^nc_jb_j$, where $c_j\in\F_q$ for $1\le j\le n$, then if $\mu_{Z,l}(x)=\sum_{i=1}^dx^if_i'$ we write
\begin{align*}
b\mu_{Z,l}(b^{q^{m-1}-1})_r
&=b\sum_{i=1}^d(b^{q^{m-1}-1})^{\frac{(q^{m-1})^i-1}{q^{m-1}-1}}f_i'
=\sum_{i=1}^df_i'\left(\sum_{j=1}^nc_jb_j\right)^{q^{i(m-1)}}\numberthis\label{linear2}\\
&=\sum_{j=1}^nc_j\alpha^{k_j}\sum_{i=1}^df_i'\left(\alpha^{k_j(q^{m-1}-1)}\right)^{\frac{q^{i(m-1)}-1}{q^{m-1}-1}}\\
&=\sum_{j=1}^nc_j\alpha^{k_j}\ov{\mu_{Z,l}^l}(a_j)
=0,
\end{align*}
where as before $k_j$ is such that $b_j=\alpha^{k_j}$, and $c_j^{q^{i(m-1)}}=c_j$ because $c_j\in\F_q$. Just like in the right case, so long as $b\ne 0$ (so some $c_j\ne0$), we have that $b^{q^{m-1}-1}$ is a left root of $\mu_{Z,l}(x)$, which means that $b^{q^{m-1}-1}$ is in the left closure of $Z$.

For the other direction, the proof is essentially the same as in the right case. The key note is that $\sum_{i=1}^df_ix^{q^{m-1}-1}$ is an $(m-1)$-linearized polynomial, and so by Theorem 4 of $\cite{Matroid}$, there are also only $q^d$ solutions to $\eqref{linear2}$, so once we find $d$ $\F_q$-linearly independent $b_i$'s, we must have that all roots are a $(q^{m-1}-1)$th power of some $\F_q$-linear combination of them.
\end{proof}

\begin{theorem}
Let $\phi:[1]\to[1]$ be defined by $\phi(a)=a^{\llbracket m-1\rrbracket}$ for all $a\in[1]$. Then $\phi$ is a matroid isomorphism from $[1]_r$ to $[1]_l$.
\end{theorem}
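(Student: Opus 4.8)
The plan is to establish the three ingredients of a matroid isomorphism: that $\phi$ maps $[1]$ into $[1]$, that it is a bijection of this ground set, and that it matches right-independent subsets of $[1]$ with left-independent ones. The first two points are quick. Since $[1]$ is the multiplicative subgroup of $\F_{q^m}^*$ consisting of the $(q-1)$-th powers, it is closed under taking powers, so $\phi(a)=a^{\llbracket m-1\rrbracket}$ lands in $[1]$; indeed $\phi$ is the power endomorphism $a\mapsto a^{\llbracket m-1\rrbracket}$ of the cyclic group $[1]$, whose order is $\llbracket m\rrbracket$. To see $\phi$ is bijective it suffices to check $\gcd(\llbracket m-1\rrbracket,\llbracket m\rrbracket)=1$, and this is immediate from $\llbracket m\rrbracket=q\,\llbracket m-1\rrbracket+1$ (both sides equal $1+q+\dots+q^{m-1}$). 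Hence $\phi$ is an automorphism of the group $[1]$, in particular a bijection.

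The heart of the argument is to transfer closures, using Lemmas~\ref{rclose} and~\ref{lclose}. Let $Z=\{a_1,\dots,a_n\}\subseteq[1]$ and, since every element of $[1]$ is a $(q-1)$-th power, pick $b_i\in\F_{q^m}^*$ with $a_i=b_i^{q-1}$. The key algebraic fact is $(q-1)\,\llbracket m-1\rrbracket=q^{m-1}-1$, which gives $\phi(a_i)=a_i^{\llbracket m-1\rrbracket}=b_i^{q^{m-1}-1}$; thus the same elements $b_i$ witness the hypothesis of Lemma~\ref{rclose} for $Z$ and of Lemma~\ref{lclose} for $\phi(Z)$. Writing $V$ for the $\F_q$-span of $b_1,\dots,b_n$, those lemmas give $\ov{Z}^r=\{t^{q-1}\mid t\in V\setminus\{0\}\}$ and $\ov{\phi(Z)}^l=\{t^{q^{m-1}-1}\mid t\in V\setminus\{0\}\}$, and since $\phi(t^{q-1})=t^{(q-1)\llbracket m-1\rrbracket}=t^{q^{m-1}-1}$ we conclude $\phi(\ov{Z}^r)=\ov{\phi(Z)}^l$ for every $Z\subseteq[1]$.

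It remains to read off independence. Every element of a right or left closure is $\sigma$-conjugate to an element of the underlying set, hence (for subsets of $[1]$) stays in $[1]$, so $\phi$ is injective on $\ov{Z\setminus\{a_i\}}^r$; combining this with the identity above applied to $Z\setminus\{a_i\}$ and with $\phi(Z\setminus\{a_i\})=\phi(Z)\setminus\{\phi(a_i)\}$, we obtain $a_i\in\ov{Z\setminus\{a_i\}}^r$ if and only if $\phi(a_i)\in\ov{\phi(Z)\setminus\{\phi(a_i)\}}^l$. Ranging over $i\in\{1,\dots,n\}$, this says $Z$ is right independent if and only if $\phi(Z)$ is left independent, which together with the bijectivity of $\phi$ is exactly the definition of a matroid isomorphism $[1]_r\to[1]_l$. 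I expect the only delicate point to be the bookkeeping around the $b_i$: the exponent identity $(q-1)\llbracket m-1\rrbracket=q^{m-1}-1$ is precisely what lets a single choice of the $b_i$ serve both Lemma~\ref{rclose} and Lemma~\ref{lclose}, after which everything is formal.
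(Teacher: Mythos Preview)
Your proof is correct and takes essentially the same approach as the paper: both rest on the exponent identity $(q-1)\llbracket m-1\rrbracket=q^{m-1}-1$ together with Lemmas~\ref{rclose} and~\ref{lclose}. The only difference is cosmetic---the paper factors through $\F_q$-linear independence of the $b_i$ (extracted from the proofs of those lemmas) as the intermediate equivalence, whereas you apply the lemma statements directly to obtain $\phi(\ov{Z}^r)=\ov{\phi(Z)}^l$; likewise your $\gcd(\llbracket m-1\rrbracket,\llbracket m\rrbracket)=1$ argument for bijectivity is just a cleaner repackaging of the paper's modular computation.
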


\begin{proof}

We start by showing that $\phi$ is a bijection from $[1]$ to $[1]$. For any $a\in[1]$, we can write $a=c^{q-1}$ for some $c\in\F_{q^m}^*$, and since $c=\alpha^i$ for some $i\in\Z$, $a=\alpha^{i(q-1)}$. For any $a,b\in[1]$, we then have $a=\alpha^{i(q-1)}$ and $b=\alpha^{j(q-1)}$ for some $i,j\in\Z$, so if $\phi(a)=\phi(b)$, we have
\begin{align*}
\phi(a)&=\phi(b)\\
a^{\frac{q^{m-1}-1}{q-1}}&=b^{\frac{q^{m-1}-1}{q-1}}&&\\
\alpha^{i(q^{m-1}-1)}&=\alpha^{j(q^{m-1}-1)}&&\\
i(q^{m-1}-1)&\equiv j(q^{m-1}-1)&&\pmod{q^m-1}\\
i(q^m-1)-i(q+1)&\equiv j(q^m-1)-j(q+1)&&\pmod{q^m-1}\\
i(q+1)&\equiv j(q+1)&&\pmod{q^m-1}\\
\alpha^{i(q+1)}&=\alpha^{j(q+1)}&&\\
a&=b,&&\\
\end{align*}
and so $\phi$ is one-to-one. Since $|[1]|=\llbracket m\rrbracket$, this means $\phi$ is a bijection from $[1]$ to $[1]$.

Next, we consider any set $Z\subseteq[1]$. If $Z$ is right independent, then in the proof for Lemma $\ref{rclose}$ we have $d=n$, and so we find that all of the $b_i$'s are $\F_q$-linearly independent. If $Z$ is right dependent, then similarly, some $a_s$ would be a root of the right minimal polynomial of $\{a_{k_1},\dots,a_{k_{s-1}}\}$, and so $b_s$ would be an $\F_q$-linear combination of $\{b_{k_1},\dots,b_{k_{s-1}}\}$, and the $b_i$'s would be $\F_q$-linearly dependent.

Thus, $Z$ is right independent if and only if $\{b_1,\dots,b_n\}$ is $\F_q$-linearly independent.

For left independence, we follow the same steps in the proof of Lemma $\ref{lclose}$ to find that $Z$ is left independent if and only if $\{b_1,\dots,b_n\}$ is $\F_q$-linearly independent.

Putting both of these together, if we have a set $Z=\{a_i\}_{1\le i\le n}\subset[1]$, where $a_i=\alpha^{k_i(q-1)}$ and let $b_i=\alpha^{k_i}$, then the following are equivalent:
\begin{itemize}
\item $\{a_1,\dots,a_n\}$ is right independent
\item $\{b_1,\dots,b_n\}$ is $\F_q$-linearly independent
\item $\{b_1^{q^{m-1}-1},\dots,b_n^{q^{m-1}-1}\}$ is left independent.
\end{itemize}
Since $\phi(a_i)=b_i^{q^{m-1}-1}$ for all $1\le i\le n$, $Z$ is right independent if and only if $\phi(Z)$ is left independent, and so $\phi$ is a matroid isomorphism between $[1]_r$ and $[1]_l$.
\end{proof}

Now we are ready to put all of these separate parts together to get the following key theorem.

\begin{theorem}
$\Mr$ is isomorphic to $\Ml$.
\end{theorem}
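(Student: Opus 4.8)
The plan is to assemble the isomorphisms $\gamma_i$ and $\phi$ constructed above, one conjugacy class at a time, after observing that both $\Mr$ and $\Ml$ split as direct sums over the conjugacy classes of $\F_{q^m}$. Recall that $\F_{q^m}=[0]\sqcup[1]\sqcup[\alpha]\sqcup\cdots\sqcup[\alpha^{q-2}]$, and that the left and right conjugacy classes coincide by the previous subsection. The first --- and substantive --- step is to show that a subset $Z\subseteq\F_{q^m}$ is right independent if and only if $Z\cap[c]$ is right independent for every conjugacy class $[c]$, and likewise on the left. One direction needs only monotonicity of the closure: if $Y\subseteq W$ then $\mu_{Y,r}\mid_r\mu_{W,r}$ (divide $\mu_{W,r}$ by $\mu_{Y,r}$; the remainder vanishes on $Y$ by additivity of right evaluation together with the product-evaluation theorem, hence is $0$ by minimality of $\mu_{Y,r}$), so $\ov{Y}^r\subseteq\ov{W}^r$; taking $Y=(Z\cap[c])\setminus\{a\}\subseteq Z\setminus\{a\}$ shows that $Z$ right independent forces $Z\cap[c]$ right independent. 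For the converse one needs $\ov{W}^r\cap[c]=\ov{W\cap[c]}^r$, which follows from the fact (Proposition 2.6 of $\cite{Wedderburn}$, already invoked for Lemma $\ref{rclose}$) that every element of $\ov{W}^r$ is $\sigma$-conjugate to an element of $W$, together with the Wedderburn fact that minimal polynomials attached to distinct conjugacy classes are right-coprime --- equivalently, $\deg\mu_{W,r}=\sum_{c}\deg\mu_{W\cap[c],r}$. The left-hand statements follow in the same way (via Corollary $\ref{leftasright}$ and Lemma $\ref{lclose}$). This exhibits $\Mr=\bigoplus_{c}[c]_r$ and $\Ml=\bigoplus_{c}[c]_l$.

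Next I match the summands. The matroids $[0]_r$ and $[0]_l$ are both the matroid on the single point $0$ with $\{0\}$ independent, hence equal. For each $i$ with $0\le i\le q-2$, the Proposition above supplies matroid isomorphisms $\gamma_i\colon[1]_r\to[\alpha^i]_r$ and $\gamma_i\colon[1]_l\to[\alpha^i]_l$, and the preceding Theorem supplies a matroid isomorphism $\phi\colon[1]_r\to[1]_l$; hence $\gamma_i\circ\phi\circ\gamma_i^{-1}$ is a matroid isomorphism $[\alpha^i]_r\to[\alpha^i]_l$, carrying $a\in[\alpha^i]$ to $\alpha^i(\alpha^{-i}a)^{\llbracket m-1\rrbracket}$. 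I then define $\Phi\colon\F_{q^m}\to\F_{q^m}$ by $\Phi(0)=0$ and $\Phi(a)=\alpha^i(\alpha^{-i}a)^{\llbracket m-1\rrbracket}$ for $a\in[\alpha^i]$. This $\Phi$ is a bijection that maps each conjugacy class onto itself and restricts on each class to the isomorphism just described, so $\Phi(Z)\cap[c]=\Phi(Z\cap[c])$ for all $Z$ and all $c$. Chaining the equivalences of the first paragraph: $Z$ is right independent $\iff$ every $Z\cap[c]$ is right independent $\iff$ every $\Phi(Z)\cap[c]=\Phi(Z\cap[c])$ is left independent $\iff$ $\Phi(Z)$ is left independent. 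Hence $\Phi$ is a matroid isomorphism $\Mr\to\Ml$.

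The only genuine obstacle is the converse half of the claim in the first paragraph --- making ``independence is decided class by class'' precise. Monotonicity is routine, but the delicate input is that linear factors from distinct conjugacy classes are right-coprime, so that $\deg\mu_{\cdot,r}$ (equivalently the rank function) is additive over classes and closures do not leak between classes. I would either cite this from $\cite{Wedderburn}$ and $\cite{LamLeroy}$, or reprove it from the product-evaluation theorem by noting that a right root of a product $g\cdot h$ lying outside the conjugacy class containing all right roots of $g$ must already be a right root of $h$. Everything after that is bookkeeping with isomorphisms already in hand.
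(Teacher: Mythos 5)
Your proposal is correct and follows essentially the same route as the paper: decompose $\F_{q^m}$ into conjugacy classes, transport $\phi$ to each class via $\gamma_i\phi\gamma_i^{-1}$, and glue these into a global bijection $\Phi$, with the substantive input being that independence is decided class by class (the paper gets this from Corollary 4.4 of the Wedderburn reference on unions of class bases, while you phrase it as rank additivity/coprimality across classes --- the same underlying fact). Your explicit treatment of the class-wise reduction, including the monotonicity argument via $\mu_{Y,r}\mid_r\mu_{W,r}$, is if anything slightly more detailed than the paper's.
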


\begin{proof}
We define a map $\Phi:\F_{q^m}\to\F_{q^m}$ by $\Phi[0]=0$, and $\Phi|{[\alpha^i]}=\gamma_i\phi\gamma_i^{-1}$ for $0\le i\le q-2$:

\begin{center}
	$\xymatrix{[1]_r\ar[d]_{\phi}\ar[r]^{\gamma_i}&[\alpha^i]_r\ar@{-->}[d]^{\gamma_i\phi\gamma_i^{-1}}\\
	[1]_l\ar[r]^{\gamma_i}&[\alpha^i]_l}$
\end{center}
Certainly, since $[0]$ is a class containing only $0$, and the set ${0}$ is the only nonempty subset thereof, it is both right and left independent, and so $\Phi$ restricted to $[0]$ is a matroid isomorphism from $[0]$ to $[0]$. We also have that $\gamma_i\phi\gamma_i^{-1}$ is a matroid isomorphism from $[\alpha^i]_r$ to $[\alpha^i]_l$ by construction. We essentially have the following diagram:
\begin{center}
	$\xymatrix{&[0]_r\ar[r]^{\Phi(0)=0}&[0]_l\ar[dr]&\\
	\Mr\ar[ur]\ar[r]\ar[dr]&[1]_r\ar[r]^{\phi}&[1]_l\ar[r]&\Ml\\
	&[\alpha^i]_r\ar[r]^{\gamma\phi\gamma_i^{-1}}&[\alpha^i]_l\ar[ur]&}$
\end{center}

We first claim that this is a bijection. Certainly, if $\Phi(a)=\Phi(b)$, then \linebreak$[\Phi(a)]=[\Phi(b)]$, and since $[\Phi(a)]=[a]$ for all $a\in\F_{q^m}$, we find that $[a]=[b]$, and since $\Phi$ restricted to each class is a bijection, $a=b$. This proves that $\Phi$ is one-to-one, and since $\F_{q^m}$ is finite, we have that $\Phi$ is a bijection.

Finally, consider any subset $Z$ of $\F_{q^m}$. If $Z$ is right independent, then certainly $Z\cap[\alpha^i]$ is right independent for $0\le i\le q-2$, as is $Z\cap[0]$. Then by the individual sub-matroid isomorphisms, $\Phi(Z\cap[\alpha^i])$ and $\Phi(Z\cap[0])$ are left independent. This means that $\Phi(Z\cap[\alpha^i])\subseteq[\alpha^i]$ is a subset of a left basis for $[\alpha^i]$, and $\Phi(Z\cap[0])\subseteq[0]$ is a subset of a left basis for $[0]$. Then by Corollary 4.4 in $\cite{Wedderburn}$, the union of these bases for $[\alpha^i]$ for each $0\le i\le q-2$ and $[0]$ is a left basis for $\F_{q^m}$, and so the union
\[\Phi(Z\cap[0])\cup\bigcup_{0\le i\le q-2}\Phi(Z\cap[\alpha^i])=\Phi(Z)\]
is a subset of this basis, and thus left independent.

We make the same argument in reverse to see that if $\Phi(Z)$ is left independent, then $Z$ is right independent, and so $\Phi(Z)$ is right independent if and only if $Z$ is right independent, which means that $\Phi$ is a matroid isomorphism and $\Mr$ is isomorphic to $\Ml$.
\end{proof}

\section{Extension Fields}
\label{extension}

In this section, we examine in detail a concept mentioned in Remark 2.8 of \cite{LeroySplitting}. If we have a skew polynomial ring $R$ with coefficients in a field $F$, then we may wish to examine the elements of $R$ as elements of a skew polynomial ring over a field extension of $F$. The following theorem allows us to do this.

\begin{theorem}
	If we have a skew polynomial ring $R=F[x;\sigma,\delta]$, and another polynomial ring $S=T[x;\gamma,\eta]$ such that $F\subseteq T$ is a field extension, $\gamma|_F=\sigma$, and $\eta|_F=\delta$, then $R$ is a subring of $S$.
\end{theorem}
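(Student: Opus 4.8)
The plan is to exhibit the obvious candidate for the embedding and check it is an injective ring homomorphism. Define $\iota\colon R\to S$ by regarding a skew polynomial $\sum_i a_ix^i$ with coefficients $a_i\in F$ as the same formal expression, now with its coefficients viewed in the larger field $T\supseteq F$. Injectivity and additivity are immediate: $\iota$ is essentially the identity on the underlying sets of formal polynomials, composed with the coefficient inclusion $F\hookrightarrow T$, and addition in both rings is coefficientwise. It remains to see that $\iota$ is unital, $\iota(1)=1$, and — the only substantive point — that $\iota$ respects multiplication.

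To prove $\iota(f\cdot_R g)=\iota(f)\cdot_S\iota(g)$ for $f(x),g(x)\in R$, I would note that both products are computed from the same two ingredients: bilinearity over the relevant coefficient field together with the single commutation rule, $xa=\sigma(a)x+\delta(a)$ in $R$ and $xa=\gamma(a)x+\eta(a)$ in $S$, applied repeatedly to rewrite each term $x^ia_jx^j$ into standard form $\sum_k c_kx^k$. First observe that, since by definition $\sigma$ and $\delta$ are maps $F\to F$, every coefficient produced while reducing $f\cdot_R g$ inside $R$ lies in $F$; indeed one checks by induction on $i$ that $x^ia=\sum_{j} p_{ij}(a)\,x^j$ for suitable $F$-valued expressions $p_{ij}$ built from compositions of $\sigma$ and $\delta$, and this identity is forced by associativity and distributivity. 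Running the identical reduction inside $S$, at every step the commutation rule is applied to an element $a\in F$, and the hypotheses $\gamma|_F=\sigma$ and $\eta|_F=\delta$ make $xa=\gamma(a)x+\eta(a)$ the very same substitution as in $R$. Hence the two reductions proceed step for step in lockstep and yield the same element of $R\subseteq S$. Since $\iota$ also preserves the identity, it is a unital ring homomorphism, and being injective it realizes $R$ as a subring of $S$.

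There is no genuine obstacle here; the only care needed is to phrase the ``reduce by repeated application of the commutation rule'' argument cleanly, e.g.\ via the induction on $i$ just indicated. Alternatively one may invoke the universal property of $R=F[x;\sigma,\delta]$ as the universal $F$-ring generated by an element $x$ satisfying $xa=\sigma(a)x+\delta(a)$: the data of the ring $S$, the field inclusion $F\hookrightarrow T$, and the element $x\in S$ (which satisfies $xa=\gamma(a)x+\eta(a)=\sigma(a)x+\delta(a)$ for $a\in F$) induce a unique $F$-algebra homomorphism $R\to S$, which one identifies with the coefficient inclusion $\iota$ and checks to be injective. Either route is routine.
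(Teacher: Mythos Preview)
Your proof is correct and takes essentially the same approach as the paper: both verify that multiplication in $S$ restricted to polynomials with coefficients in $F$ coincides with that of $R$, because the commutation rule $xa=\gamma(a)x+\eta(a)$ reduces to $xa=\sigma(a)x+\delta(a)$ whenever $a\in F$, and both check that the identity is preserved. The paper phrases this as directly verifying the subring axioms for $R\subseteq S$, while you frame it via the explicit inclusion map $\iota$ (and offer the universal property as an alternative), but the substance is identical.
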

\begin{proof}
	We have that $R\subseteq S$ since $F\subseteq T$. $R$ is additively closed in $S$. For multiplication, we have to worry not only about $F$ being multiplicatively closed, but about the action of the new automorphism $\gamma$, and its $\gamma$-derivation, $\eta$. However, since we know $\gamma|_F=\sigma$ and $\eta|_F=\delta$, and $\sigma$ and $\delta$ both map into $F$, we know that whenever $\gamma$ and $\eta$ are applied to an element of $F$ in the multiplication of two polynomials from $R$, the resulting coefficient will be in $F$, and so the product polynomial will also be in $R$.
	
	Finally, $R$ must contain the multiplicative identity of $S$. In skew polynomial rings, the multiplicative identity is always just $1_K$. In the case of field extensions, $1_F=1_T$ so, $1_T\in R$, and so $R$ is indeed a subring of $S$.
\end{proof}

In the case that $F$ is a finite field, we construct such a ring $S$.

\begin{theorem}
	Let $R=\F_{q^m}[x;\sigma;\delta]$ be a skew polynomial ring with $\sigma\ne1$ and $\T\supseteq\F_{q^m}$ be a finite extension field. Then there exist an automorphism $\gamma$ and a $\gamma$-derivation $\eta$ such that $S=\T[x;\gamma,\eta]$ is a skew polynomial ring and $R$ is a subring of $S$.
\end{theorem}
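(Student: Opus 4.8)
The plan is to produce $\gamma$ and $\eta$ explicitly from the classification of automorphisms and derivations of finite fields, and then to quote the preceding theorem. Write $p$ for the characteristic of $\F_{q^m}$, so that $\F_{q^m}=\F_{p^n}$ for some $n$ and $\T=\F_{p^N}$ with $n\mid N$. Since every automorphism of a finite field is a power of the prime Frobenius, there is an integer $s$ with $\sigma(a)=a^{p^s}$ for all $a\in\F_{q^m}$. I would then define $\gamma\colon\T\to\T$ by $\gamma(a)=a^{p^s}$; this is an automorphism of $\T$ (a power of the Frobenius of $\T$), and because $\F_{q^m}\subseteq\T$ it restricts to the map $a\mapsto a^{p^s}=\sigma(a)$ on $\F_{q^m}$, so $\gamma|_{\F_{q^m}}=\sigma$.

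The substantive step is extending $\delta$ to a $\gamma$-derivation $\eta$. If $\sigma=\mathrm{id}$, then $\delta$ is an ordinary derivation of the finite field $\F_{q^m}$; evaluating $\delta$ on the (separable) minimal polynomial over $\F_p$ of an arbitrary element forces $\delta\equiv0$, and then $\eta\equiv0$ works. If $\sigma\neq\mathrm{id}$, I would show $\delta$ is inner: choose $b\in\F_{q^m}$ with $\sigma(b)\neq b$, set $c=\delta(b)(b-\sigma(b))^{-1}$, and check that $\delta-\delta_c$, where $\delta_c(a)=c\bigl(a-\sigma(a)\bigr)$, is a $\sigma$-derivation vanishing at $b$; comparing its values on $ab$ and $ba$ and using commutativity of $\F_{q^m}$ gives $(\delta-\delta_c)(a)\,(b-\sigma(b))=0$ for every $a$, hence $\delta=\delta_c$. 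I would then set $\eta(a)=c\bigl(a-\gamma(a)\bigr)=ca-\gamma(a)c$ for $a\in\T$; this inner $\gamma$-derivation is additive, satisfies $\eta(ab)=\gamma(a)\eta(b)+\eta(a)b$ by a one-line computation in the commutative ring $\T$, and restricts on $\F_{q^m}$ to $c(a-\sigma(a))=\delta(a)$, so $\eta|_{\F_{q^m}}=\delta$.

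Finally, since $\gamma$ is an automorphism of $\T$ and $\eta$ is a $\gamma$-derivation, $S=\T[x;\gamma,\eta]$ is a skew polynomial ring by definition, and $\F_{q^m}\subseteq\T$, $\gamma|_{\F_{q^m}}=\sigma$, $\eta|_{\F_{q^m}}=\delta$; the previous theorem then yields that $R$ is a subring of $S$. I expect the only genuine obstacle to be the claim that a $\sigma$-derivation of a field with $\sigma\neq\mathrm{id}$ is inner — everything else is the standard description of automorphisms and derivations of finite fields. (In the setting actually used in this paper one has $\delta\equiv0$, so $\eta\equiv0$ and this step is vacuous, but the argument above covers the general statement.)
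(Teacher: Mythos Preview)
Your argument is correct and follows essentially the same route as the paper: extend $\sigma$ to $\gamma$ by keeping the same Frobenius power, recognize that $\delta$ must be the inner derivation $\delta_c(a)=c(a-\sigma(a))$ for some $c\in\F_{q^m}$, extend it to $\eta(a)=c(a-\gamma(a))$ on $\T$, verify the $\gamma$-derivation identity, and invoke the preceding theorem. The one noteworthy difference is that you actually \emph{prove} the innerness of $\delta$ (and separately handle the case $\sigma=\mathrm{id}$, where every derivation of a finite field vanishes), whereas the paper simply writes down $\delta_d(a)=d(a-\sigma(a))$ and proceeds as if the given $\delta$ were already known to be of this form; your version therefore closes a small gap in the paper's own presentation.
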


\begin{proof}
	Given any field extension $\T\supset\F_{q^m}$ and an automorphism $\sigma$ of $\F_{q^m}$, it is a basic result of field theory that we can extend this to an automorphism $\gamma$ of $\T$ such that $\gamma|_{\F_{q^m}}\equiv\sigma$. However, we must also consider $\delta$.
	
	We show that $\delta$ is an inner derivation of the form $\delta(a)=d(a-\sigma(a))$, as seen in the introduction to \cite{BoucherDerivation} (via Chapter 8, Theorem 3.1. of \cite{CohnRings}). Note that since $\F_{q^m}$ is a field, $ab=ba$ for any $a,b\in\F_{q^m}$. Thus, we may write
	
	\begin{align*}
	\delta(ab)&=\delta(ba)\\
	\sigma(a)\delta(b)+\delta(a)b&=\sigma(b)\delta(a)+\delta(b)a\\
	\delta(a)(b-\sigma(b))&=\delta(b)(a-\sigma(a))
	\end{align*}
	
	Consider some $b\in\F_{q^m}$ such that $\sigma(b)\ne b$ (which must exist if $\sigma$ is not the identity homomorphism). Then we have $b-\sigma(b)\ne0$, and so we may define $d=\delta(b)(b-\sigma(b))^{-1}$. For every $a\in\F_{q^m}$, the above equality then gives us
	\[\delta(a)=d(a-\sigma(a))\]
	
	In this case, since $d\in\T$, we let $\eta(a)=d(a-\gamma(a))$. Then for any $a\in\F_{q^m}$, we have $\eta(a)=d(a-\gamma(a))=d(a-\sigma(a))\equiv\delta(a)$, and so $\eta|_{\F_{q^m}}=\delta$. We also must check that this is a $\gamma$-derivation. First, \[\eta(a+b)=d(a+b-\gamma(a+b))=d(a-\gamma(a))+d(b-\gamma(b))=\eta(a)+\eta(b),\]
	so it is an additive homomorphism. Then we note that, for any $a,b\in T$,
	\begin{align*}
	\eta(ab)&=d(ab-\gamma(ab))\\
	&=d(ab-\gamma(a)b)+d(\gamma(a)b-\gamma(ab))\\
	&=\gamma(a)\eta(b)+\eta(a)b.
	\end{align*}
	And so $\eta$ has the property required of a $\gamma$-derivation. Thus, any such $\sigma$-derivation $\delta$ can be extended to a $\gamma$-derivation $\eta$, and we have that there exists a ring $S=\T[x;\gamma,\eta]$ such that $R$ is a subring of $S$.
\end{proof}

We now investigate the right and left roots of polynomials when we  extend the underlying field of a skew polynomial ring with zero derivation. Start with a skew polynomial ring $R=\F_{q^m}[x;\sigma]$, where $\sigma$ is the Frobenius automorphism. Then let $t=km$ for some $k\ge2$, and consider operations in the field $\F_{q^t}$. We note that $\sigma$ can be extended to $\gamma:\F_{q^t}\to\F_{q^t}$. That is, $\gamma|_{\F_{q^m}}=\sigma$. In fact, we can write $\gamma(a)=a^q$, and this is still an automorphism for $\F_{q^t}$, and when restricted to $\F_{q^m}$ is just $\sigma$, so this is what we shall use. Since $\gamma|_{\F_{q^m}}=\sigma$, any elements of $\F_{q^m}$ that were right roots of a polynomial as an element of $R$ will still be right roots of the same polynomial in $S$.

Now $\gamma^{-1}(a)=a^{q^{t-1}}$, but we expect that left roots should not be affected. This gives rise to the following proposition.

\begin{proposition}
	If $f(x)\in R=\F_{q^m}[x;\sigma]$, where $R$ is a subring of $S$, and $a\in\F_{q^m}$, then $f(a)_l$ when $f(x)$ is considered as an element of $S$ is the same as $f(a)_l$ when $f(x)$ is considered an element of $R$.
\end{proposition}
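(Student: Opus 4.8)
The plan is to compute both left evaluations using the explicit coefficients from the Remark and verify that they agree on $\F_{q^m}$. Recall that in $R=\F_{q^m}[x;\sigma]$ one first rewrites $f(x)=\sum_i x^i f_i'$ (so that $f_i'=\sigma^{-i}(f_i)$) and then $f(a)_l=\sum_i M_i^R(a)f_i'$, where $M_i^R(a)=a^{\rrbracket i\llbracket}$ and the exponent $\rrbracket i\llbracket=\frac{q^{i(m-1)}-1}{q^{m-1}-1}$ comes from the recursion $M_{i+1}(a)=a\,\sigma^{-1}(M_i(a))$ with $\sigma^{-1}(a)=a^{q^{m-1}}$. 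In $S=\F_{q^t}[x;\gamma]$ with $t=km$ and $\gamma(a)=a^q$ (so that $\gamma|_{\F_{q^m}}=\sigma$, as in the paragraph preceding the statement), the same theorem gives $f(a)_l=\sum_i M_i^S(a)f_i''$, where now $f(x)=\sum_i x^i f_i''$ inside $S$ and the coefficients $M_i^S$ are built from $\gamma^{-1}(a)=a^{q^{t-1}}$.

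It then suffices to check two things. First, the rewriting with coefficients on the right is unchanged: for $a\in\F_{q^m}$ we have $x^i a=\gamma^i(a)x^i=\sigma^i(a)x^i$, hence $f_i''=\gamma^{-i}(f_i)=\sigma^{-i}(f_i)=f_i'$, since $\F_{q^m}$ is $\gamma$-invariant and $\gamma$ restricts to $\sigma$ on it. Second, $M_i^S(a)=M_i^R(a)$ for every $a\in\F_{q^m}$ and every $i$. Granting these, the left evaluation of $f$ at $a$ computed in $S$ equals $\sum_i M_i^R(a)f_i'$, which is the left evaluation of $f$ at $a$ computed in $R$, as claimed.

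The heart of the argument, and the step I expect to carry the weight, is the identity $M_i^S(a)=M_i^R(a)$ on $\F_{q^m}$, and it reduces to the single observation that $\gamma^{-1}$ and $\sigma^{-1}$ agree on $\F_{q^m}$, i.e.\ $a^{q^{t-1}}=a^{q^{m-1}}$ for all $a\in\F_{q^m}$. This is precisely where $m\mid t$ enters: for $a\neq0$ exponents may be reduced modulo $q^m-1$, and $q^t=q^{km}\equiv1\pmod{q^m-1}$ gives $q^{t-1}\equiv q^{m-1}\pmod{q^m-1}$ (each being the inverse of $q$ modulo $q^m-1$), while $a=0$ is trivial. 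From here the identity follows by induction on $i$: $M_0^S(a)=1=M_0^R(a)$, and if $M_i^S(a)=M_i^R(a)$ then this common value, being a power of $a$, lies in $\F_{q^m}$, so $M_{i+1}^S(a)=a\,\gamma^{-1}(M_i^S(a))=a\,\sigma^{-1}(M_i^R(a))=M_{i+1}^R(a)$. Alternatively one may bypass the recursion and check directly that $\frac{q^{i(t-1)}-1}{q^{t-1}-1}\equiv\frac{q^{i(m-1)}-1}{q^{m-1}-1}\pmod{q^m-1}$, comparing the two geometric sums term by term using $q^{j(t-1)}\equiv q^{j(m-1)}\pmod{q^m-1}$. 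The only point that genuinely needs care is the bookkeeping of exponents modulo $q^m-1$ and the use of the hypothesis $m\mid t$ to collapse $\gamma^{-1}$ onto $\sigma^{-1}$ on $\F_{q^m}$; everything else is routine.
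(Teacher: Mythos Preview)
Your proof is correct and follows essentially the same approach as the paper: both reduce the claim to the identity $\gamma^{-1}|_{\F_{q^m}}=\sigma^{-1}$, with your version supplying the extra detail (the check that $f_i''=f_i'$ and the induction giving $M_i^S=M_i^R$) that the paper leaves implicit in the phrase ``it is enough to show.'' The paper's computation $a^{q^{t-1}}=(a^{q^{m-1}})^{q^{t-m}}=\sigma^{-1}(a)$, using $a^{q^m}=a$ for $a\in\F_{q^m}$, is equivalent to your congruence argument modulo $q^m-1$.
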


\begin{proof}
	It is enough to show that $\gamma^{-1}|_{\F_{q^m}}=\sigma^{-1}$. Indeed, for $a\in\F_{q^m}$,
	\begin{align*}
	\gamma^{-1}(a)=a^{q^{t-1}}&=\left(a^{q^{m-1}}\right)^{q^{t-m}}
	=\left(\sigma^{-1}(a)\right)^{q^{km-m}}
	=\left(\sigma^{-1}(a)\right)^{q^{m(k-1)}}
	=\sigma^{-1}(a),
	\end{align*}
	since $(k-1)m$ is a multiple of $m$, and $a^{q^m}=a$ for all $a\in\F_{q^m}$.
\end{proof}

In particular, this means that a polynomial does not gain or lose left roots in the original field when it is considered as an element of a polynomial ring with a field extension.

If we have a set of elements $Z=\{a_1,a_2,\dots,a_n\}$ that is right independent in $R=\F_{q^m}[x;\sigma]$, then for any $i\in\{1,\dots,n\}$, we have that $\mu_{Z\setminus\{a_i\},r}(a_i)_r\ne0$ by Definition $\ref{closure}$. We note that the procedure to calculate the minimal polynomial in the extension $S=\F_{q^t}[x;\gamma]$ depends on $\gamma$, which when restricted to elements in $\F_{q^m}$ such as those in $Z$ is the same as $\sigma$. Therefore, in the extended ring $S$, the minimal polynomial $\mu_{Z\setminus\{a_i\},r}$ will be the same as in $R$, and since right evaluation is the same, we will find that $\mu_{Z\setminus\{a_i\},r}(a_i)_r\ne0$ in $S$ as well, and since this is true for all $i\in\{1,\dots,n\}$, we have that $Z$ is right independent in $S$.

We apply the same reasoning for left independence. Given a set \linebreak$Z=\{a_1,a_2,\dots,a_n\}$ that is left independent in $R$, the construction of the left minimal polynomial for $Z\setminus\{a_i\}$ in $S$ for any $i\in\{1,\dots,n\}$ depends upon $\gamma^{-1}$. But we have just proven that $\gamma^{-1}|_{\F_{q^m}}=\sigma^{-1}$, and so we will arrive at the same minimal polynomial as in $R$, and the left evaluation is the same, so $\mu_{Z\setminus\{a_i\},l}(a_i)_l\ne0$ for all $i\in\{1,\dots,n\}$, and the set $Z$ is also left independent in $S$.

\subsection{Splitting Fields}
\label{splitting}
Let $f(x)$ be any polynomial in a skew polynomial ring $R$. This polynomial may be written as $f(x)=\sum_{i=0}^nf_ix^i$ for some $n\in\N$. We then recall that the corresponding right evaluation polynomial is
\[\ov{f^r}(y)=\sum_{i=0}^nf_iy^{\llbracket i\rrbracket}.\]
Upon taking the formal derivative of this polynomial, we have
\[\ov{f^r}'(y)=\sum_{i=1}^n\llbracket i\rrbracket f_iy^{\llbracket i\rrbracket-1}=\sum_{i=1}^nf_iy^{\llbracket i\rrbracket-1}\]
because for $j>0$, $q\mid q^j$, and so $\llbracket i\rrbracket\equiv 1\pmod{q}$.

Multiplying by $y$ gives us
\[y\ov{f^r}'(x)=y\sum_{i=1}^nf_iy^{\llbracket i\rrbracket-1}=\sum_{i=1}^nf_iy^{\llbracket i\rrbracket}=-f_0+\sum_{i=0}^nf_iy^{\llbracket i\rrbracket}=-f_0+\ov{f^r}(y),\]
and so we have that $\ov{f^r}(y)=y\ov{f^r}'(y)+f_0$.

If we let $K_f$ be the splitting field of the polynomial $\ov{f^r}(y)$ over $\F_{q^m}$, then in $K_f[y]$, the polynomial $\ov{f^r}(y)$ factors into linear terms. We examine the structure of the resulting roots.

\begin{theorem}\label{unroots}
	If $n=\deg f(x)$ and $k=\min_{i\in\N}\{i|f_i\ne0\}$, then in $K_f[x;\sigma]$, $f(x)$ has $\llbracket n-k\rrbracket$ distinct nonzero right roots, each with multiplicity $q^k$.
\end{theorem}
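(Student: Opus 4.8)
The plan is to push everything down to the commutative right evaluation polynomial. Viewed in $K_f[x;\gamma]$ the extended automorphism is still $a\mapsto a^q$, so $N_i(a)=a^{\llbracket i\rrbracket}$ is unchanged and $f(a)_r=\ov{f^r}(a)$ for every $a\in K_f$; hence the nonzero right roots of $f(x)$ in $K_f[x;\sigma]$ are exactly the nonzero roots of $\ov{f^r}(y)=\sum_{i=k}^n f_i y^{\llbracket i\rrbracket}$ in $K_f$, and a root's multiplicity is its multiplicity in $\ov{f^r}$. The first step is to isolate a Frobenius twist: since $\llbracket i\rrbracket-\llbracket k\rrbracket=q^k\llbracket i-k\rrbracket$ for $i\ge k$, one can write
\[
\ov{f^r}(y)=y^{\llbracket k\rrbracket}\,g\!\left(y^{q^k}\right),\qquad g(z)=\sum_{j=0}^{n-k} f_{j+k}\,z^{\llbracket j\rrbracket}\in\F_{q^m}[z],
\]
which has degree $\llbracket n-k\rrbracket$ (leading coefficient $f_n\neq0$) and constant term $g(0)=f_k\neq0$.

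Next I would show that $g$ is separable. Running the same formal-derivative computation already carried out above for $\ov{f^r}$ — each exponent $\llbracket j\rrbracket$ reduces to $1$ in $\F_{q^m}$ because $\llbracket j\rrbracket\equiv1\pmod q$ — gives the identity $g(z)=z\,g'(z)+f_k$. If $c$ were a repeated root of $g$ it would satisfy $g(c)=g'(c)=0$, forcing $f_k=0$, a contradiction; so $\gcd(g,g')=1$ and $g$ has $\llbracket n-k\rrbracket$ distinct roots $c_1,\dots,c_{\llbracket n-k\rrbracket}$ in $\ov{\F_q}$, all nonzero. Each $c_\ell$ has a unique $q^k$-th root $b_\ell\in\ov{\F_q}$, and $\ov{f^r}(b_\ell)=b_\ell^{\llbracket k\rrbracket}g(c_\ell)=0$, so $b_\ell$ is a root of $\ov{f^r}$ and therefore lies in the splitting field $K_f$; consequently $c_\ell=b_\ell^{q^k}\in K_f$ as well, so $g$ already splits over $K_f$.

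Finally I would read off the count and the multiplicities from the characteristic-$p$ factorization $y^{q^k}-c_\ell=(y-b_\ell)^{q^k}$: writing $g(z)=f_n\prod_\ell(z-c_\ell)$ yields
\[
\ov{f^r}(y)=f_n\,y^{\llbracket k\rrbracket}\prod_{\ell=1}^{\llbracket n-k\rrbracket}(y-b_\ell)^{q^k}.
\]
Because $x\mapsto x^{q^k}$ is a bijection of $K_f$, the $b_\ell$ are distinct and nonzero, so the nonzero right roots of $f(x)$ in $K_f[x;\sigma]$ are precisely $b_1,\dots,b_{\llbracket n-k\rrbracket}$, each with multiplicity $q^k$; the root $0$ absorbs the remaining multiplicity $\llbracket k\rrbracket$, consistent with $q^k\llbracket n-k\rrbracket+\llbracket k\rrbracket=\llbracket n\rrbracket=\deg\ov{f^r}$. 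I expect the only real subtlety to be the bookkeeping around this characteristic-$p$ factorization and the verification that $K_f$, defined merely as the splitting field of $\ov{f^r}$, genuinely contains all the $b_\ell$ (equivalently all the $c_\ell$); the twist factorization and the separability argument are both short.
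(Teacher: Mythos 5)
Your proof is correct and follows essentially the same route as the paper: reduce to the right evaluation polynomial, use the identity $\ov{f^r}(y)=y\,\ov{f^r}{}'(y)+f_0$ (applied to the trailing coefficient) to get separability, and factor out $y^{\llbracket k\rrbracket}$ so that the remaining nonzero roots come with multiplicity $q^k$. Your factor $g\bigl(y^{q^k}\bigr)$ is literally the paper's $\bigl(\sum_{i=0}^{n-k}\sigma^{-k}(f_{i+k})y^{\llbracket i\rrbracket}\bigr)^{q^k}$ written without the coefficient twist, and your extra bookkeeping (the $(y-b_\ell)^{q^k}$ factorization and the check that the roots lie in $K_f$) only makes explicit what the paper leaves implicit.
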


\begin{proof}
	We start by considering the case where $k=0$. That is, when $f_0\ne0$. We examine $\ov{f^r}(y)$. If it has a repeated root, then by Theorem 1.68 in $\cite{LidlNiederreiter}$, this root will be a root of both $\ov{f^r}(y)$ and $\ov{f^r}'(y)$. Since $\ov{f^r}(y)=y\ov{f^r}'(y)+f_0$, if $a$ is a repeated root, we would have $\ov{f^r}(a)=0=\ov{f^r}'(y)$, which would mean $f_0=0$, so if $f_0\ne0$, there cannot be any repeated root. This means that $\ov{f^r}(y)$ has $\llbracket n\rrbracket=\frac{q^n-1}{q-1}$ distinct roots in $K_f[y]$. Since $\ov{f}(y)\in K_f[y]$ is the polynomial for the evaluation of $f(x)\in K_f[x;\gamma]$, we know that in $K_f[x;\gamma]$, $f(x)$ has $\llbracket n\rrbracket$ distinct roots, each with multiplicity $1$.
	
	If instead we consider $k>0$, then we have $f_i=0$ for all $i<k$ and $f_k\ne0$. We then write
	\begin{align*}
	\ov{f^r}(y)&=\sum_{i=k}^nf_iy^{\llbracket i\rrbracket}
	=y^{\llbracket k\rrbracket}\sum_{i=k}^nf_iy^{\llbracket i\rrbracket-\llbracket k\rrbracket}\\
	&=y^{\llbracket k\rrbracket}\left(\sum_{i=k}^n\sigma^{-k}(f_i)y^{\llbracket i-k\rrbracket}\right)^{q^k}
	=y^{\llbracket k\rrbracket}\left(\sum_{i=0}^{n-k}\sigma^{-k}(f_{i+k})y^{\llbracket i\rrbracket}\right)^{q^k}.
	\end{align*}
	And so we see that this polynomial has a root of $0$ with multiplicity $\llbracket k\rrbracket$, and then has $q^k$ copies of a polynomial with $\llbracket n-k\rrbracket$ distinct roots, and so every nonzero root has the same multiplicity, $q^k$.
\end{proof}

By noting that $\sigma$ is an automorphism, we see that $f(x)$ is divisible by $x^k$ on the left, and so $f(x)=x^kf_2(x)$, where $f_2(x)$ has the greatest possible number of roots, $\llbracket n-k\rrbracket=\llbracket \deg f_2(x)\rrbracket$. This motivates a definition for the splitting field of a skew polynomial. However, before we define the splitting field, we note that the above work was done with reference to right evaluation. A natural question is what happens to the left roots. We begin with a useful lemma

\begin{lemma}
	Given a right independent set $Z$, if we define the function $\phi_Z$ by $\phi_Z(a)=\sigma(\mu_{Z\setminus\{a\},r}(a)_r)a(\mu_{Z\setminus\{a\},r}(a)_r)^{-1}=a(\mu_{Z\setminus\{a\},r}(a)_r)^{q-1}$, then
	\begin{equation}\label{e:min_polys}\mu_{Z,r}(x)=\mu_{\phi_Z(Z),l}(x).
	\end{equation}
\end{lemma}

\begin{proof}
	We prove this by induction on $|Z|$. Let $b_i=a_i(\mu_{Z\setminus\{a_i\},r}(a_i)_r)^{q-1},$
	so that $\phi_Z(Z)=\{b_1,\dots,b_n\}$. If $|Z|=1$, then
    $Z=\{a_1\}$ for some $a_1\in K$ and $\mu_{Z,r}(x)=(x-a_1)$ is its right minimal polynomial. Note that since $Z\setminus\{a_1\}=\emptyset$, we have $\mu_{Z\setminus\{a_1\},r}(x)=\mu_{\emptyset,r}(x)=1$. Thus, \[b_1=\sigma(\mu_{Z\setminus\{a_1\},r}(a_1)_r)a_1(\mu_{Z\setminus\{a_1\},r}(a_1)_r)=\sigma(1)a_1\cdot1^{-1}=a_1,\] which means $\phi_Z(Z)=\{a_1\}$, and $\mu_{\phi_Z(Z),l}(x)=(x-a_1)$, and so we indeed have $\mu_{Z,r}(x)=\mu_{\phi_Z(Z),l}(x).$
	
	Next, let $n\ge2$ and assume that \eqref{e:min_polys} holds for any right independent set $Z\in\Is$ such that $|Z|=n-1$. Let $Z\in\Is$ be a right independent set with $n$ elements. For $a_1\in Z$, let $Z'=Z\setminus\{a_1\}$ and $\phi_{Z'}(Z')=\{b_2',\dots,b_n'\}$. Since $|Z'|=n-1$, then it holds that 
	\[\mu_{Z',r}(x)=\mu_{\phi_{Z'}(Z'),l}(x).\]
	
	By  interpolation, we have
	\begin{align*}
	\mu_{Z,r}(x)&=(x-a_1(\mu_{Z',r}(a_1)_r)^{q-1})\mu_{Z',r}(x)=(x-b_1)\mu_{Z',r}(x)%
	\end{align*}
	We further note that for any $2\le i\le n$, we may write
	\begin{align*}
	\mu_{Z',r}(x)&=(x-a_i(\mu_{Z'\setminus\{a_i\},r}(a_i)_r)^{q-1})\mu_{Z'\setminus\{a_i\},r}(x)
	=(x-b_i')\mu_{Z'\setminus\{a_i\},r}(x)
	\end{align*}
    Thus, we may write
	\[\mu_{Z,r}(x)=(x-b_1)(x-b_i')\mu_{Z'\setminus\{a_i\},r}(x)\]
	However, we may also write
	\begin{align*}
	\mu_{Z,r}(x)&=(x-b_i)\mu_{Z\setminus\{a_i\},r}(x)\\
	&=(x-b_i)(x-a_1(\mu_{Z\setminus\{a_i,a_1\},r}(a_1)_r)^{q-1})\mu_{Z\setminus\{a_i,a_1\},r}(x)
	\end{align*}
	
	Noting that $Z\setminus\{a_i,a_1\}=Z'\setminus\{a_i\}$ and using $b_1'$ to denote the constant in the second term, we have that
	\[\mu_{Z,r}(x)=(x-b_i)(x-b_1')\mu_{Z'\setminus\{a_i\},r}(x).\]
	
	Since in the division of $\mu_{Z,r}(x)$ on the right by $\mu_{Z'\setminus\{a_i\},r}(x)$, there will be a unique quotient (which we will call $p_i(x)$), we have that
	\[(x-b_1)(x-b_i')=p_i(x)=(x-b_i)(x-b_1'),\]
	so $(x-b_1)|_lp_i(x)$ and $(x-b_i)|_lp_i(x)$, and so $b_1$ and $b_i$ are both left roots of $p_i(x)$. If we had $b_1=b_i$, then since
	\[(x-b_1)\mu_{Z\setminus\{a_1\},r}(x)=\mu_{Z,r}(x)=(x-b_i)\mu_{Z\setminus\{a_i\},r}(x)\]
	we would have by the uniqueness of the quotient of division of $\mu_{Z,r}(x)$ on the left by $(x-b_1)=(x-b_i)$ that
	\[\mu_{Z\setminus\{a_1\},r}(x)=\mu_{Z\setminus\{a_i\},r}(x)\]
	But this would in turn imply that
	\[\mu_{Z\setminus\{a_1\},r}(a_1)_r=\mu_{Z\setminus\{a_i\},r}(a_1)_r=0,\]
	which contradicts $Z$ being right independent. Thus, $b_1\ne b_i$, and so $\{b_1,b_i\}$ is left independent. The left minimal polynomial of this set is the monic polynomial of least degree with both $b_1$ and $b_i$ as left roots, and so we find by our previous work that $\mu_{\{b_1,b_i\},l}(x)=p_i(x)$.
	
	Since $\mu_{\phi_Z(Z),l}(x)$ has $b_1$ and $b_i$ as left roots, it must thus be divisible on the left by their minimal polynomial, and so $p_i(x)|_l\mu_{\phi_Z(Z),l}(x)$ for $2\le i\le n$. Thus, we may write
	\begin{align*}
	\mu_{\phi(Z),l}(x)&=p_i(x)g_i(x)\\
	&=(x-b_1)(x-b_i')g_i(x).
	\end{align*}
	Again, using the uniqueness of the quotient in left division, we find that we must have $\mu_{\phi_Z(Z),l}(x)=(x-b_1)g(x)$, where $g(x)=(x-b_i')g_i(x)$ for all $2\le i\le n$. But then we have that $g(x)$ has all $b_i'$ as left roots, so $\mu_{\phi_{Z'}(Z'),l}(x)|_lg(x)$. Since $\mu_{\phi_{Z'}(Z'),l}(x)=\mu_{Z',r}(x)$ by assumption, and $\deg(\mu_{Z',r}(x))=n-1$, we have that $\deg(g(x))\ge n-1$. However, since $\mu_{\phi_Z(Z),l}(x)=(x-b_1)g(x)$, we also have $\deg(g(x))=\deg(\mu_{\phi(Z),l}(x))-1\le n-1$, and so $\deg(g(x))=n-1$. Comparing the leading coefficients, we see that $g(x)$ must be monic, and so this means that $g(x)=\mu_{\phi_{Z'}(Z'),l}(x)$.
	
	Thus, we have that
	\begin{align*}
	\mu_{\phi_Z(Z),l}(x)&=(x-b_1)g(x)\\
	&=(x-b_1)\mu_{\phi_{Z'}(Z'),l}(x)\\
	&=(x-b_1)\mu_{Z',r}(x)\\
	&=\mu_{Z,r}(x).
	\end{align*}
	And since we have now shown that $\mu_{\phi(Z),l}(x)=\mu_{Z,r}(x)$ for any set $Z$ that is right independent with $|Z|=n$, by induction it is true that $\mu_{\phi(Z),l}(x)=\mu_{Z,r}(x)$ for any right independent set $Z$.
\end{proof}

\begin{corollary}
In $K_f[x;\sigma]$ as defined above, $f(x)$ also has $\llbracket n-k\rrbracket$ distinct nonzero left roots.
\end{corollary}
\begin{proof}
As above, we note that $f(x)=x^kf_2(x)$, where $f_2(x)$ has the greatest possible number of right roots, $\llbracket n-k\rrbracket$. This means that we may select a subset of $Z$ independent right roots of $f_2(x)$, such that $|Z|=n-k$. Then by divisibility properties, we have that $\mu_{Z,r}(x)|_rf_2(x)$, and so we may write $f(x)=x^kc\mu_{Z,r}(x)$ for some $c\ne0$ (since $\deg f_2(x)=\deg\mu_{Z,r}(x)$). From this we may write that
\begin{align*}
f(x)&=x^kc\mu_{Z,r}(x)\\
&=x^kc\mu_{\phi_Z(Z),l}(x),
\end{align*}
Where $\mu_{\phi_Z(Z),l}(x)$ has $\llbracket n-k\rrbracket$ distinct nonzero left roots. For any left root $b$ of $\mu_{\phi_Z(Z),l}(x)$, we may write
\begin{align*}
f(x)&=x^kc\mu_{\phi_Z(Z),l}(x)\\
&=x^kc(x-b)g_b(x)\\
&=x^k(x-bc\sigma^{-1}(c)^{-1})\sigma^{-1}(c)g_b(x)\\
&=(x-\sigma^k(bc\sigma^{-1}(c)^{-1}))x^k\sigma^{-1}(c)g_b(x),
\end{align*}
which means that $\sigma^k(bc\sigma^{-1}(c)^{-1})$ is a left root of $f(x)$. If we have $b_1\ne b_2$, then $\sigma^k(b_1c\sigma^{-1}(c)^{-1})\ne\sigma^k(bc\sigma^{-1}(c)^{-1})$ because $\sigma$ is an automorphism. This means that $f(x)$ has $\llbracket n-k\rrbracket$ distinct nonzero left roots.
\end{proof}

\begin{remark}
It may seem that since left evaluation polynomials have a higher degree than right evaluation polynomials, examining the splitting field of the left evaluation polynomial would result in a ring where the polynomial has more thank $\llbracket n-k\rrbracket$ nonzero left roots. However, by factoring $f(x)$ as $f_2'(x)x^k$, and applying the reasoning of Lemma $\ref{lclose}$ to $f_2'(x)$, we see that there is a limit of $\llbracket n-k\rrbracket$ left roots, just as there is for right roots.

There is thus a discrepancy between the degree of the evaluation polynomial for $f_2'(x)$ (which by the same reasoning as Theorem $\ref{unroots}$, has no repeated roots), and the actual number of left roots when $f_2'(x)$ is considered as an element of $K_f'[x;\gamma]$, where $K_f'$ is the splitting field of $\ov{f_2'^l}(y)$. This can be explained by noting that in $K_f'[x;\gamma]$, while $\gamma^{-1}|_K=\sigma^{-1}$, the two functions do not have the same order, and so the left evaluation polynomial for $f_2(x)$ is no longer $\ov{f_2'^l}(y)$.
\end{remark}

\begin{example}
In $\F_{2^3}[x;\sigma]$, where $\sigma(a)=a^2$ for all $a\in\F_{2^3}$, we find that the polynomial $f=x^2+1$ has $1$ as its only right root and only left root. Since  $\ov{f^r}(y)=y^3+1$, we find that the splitting field $K_f$ is $\F_{2^6}$, where $f(x)$ still has right evaluation polynomial $y^3+1$, which splits as $(y-1)(y-b^{21})(y-b^{42})$, where $b$ is the primitive element of $K_f=\F_{2^6}$ and $b^9=a$ is the primitive element of $\F_{2^3}$. This means that $1,b^{21},$ and $b^{42}$ are right roots of $x^2+1$. As it happens, these are also the left roots, and we see that the polynomial has the greatest possible number of left and right roots given its degree.

The left evaluation polynomial of $f(x)$ over $\F_{2^3}$ is $\ov{f^l}(y)=y^5+1$. The splitting field of this polynomial is $\F_{2^{12}}$. Since we now have $m=12$, in this new field, the left evaluation polynomial for $x^2+1$ is $y^{2049}+1$. This means that in the new field, the left evaluation polynomial does not split completely. In fact, its only roots are $1,c^{1365},$ and $c^{2730}$, where $c$ is the primitive element of $\F_{2^{12}}$. Since we have $c^{65}=b$ from before, we note that this polynomial still has right roots $1,b^{21},$ and $b^{42}$. These are also the left roots, and so while the field is larger, we have not gained any roots for the polynomial.
\end{example}

It is precisely because the splitting field of the right evaluation polynomial gives us the most right and left roots possible that it is properly referred to as the splitting field for $f(x)$.

\begin{definition}
	For a polynomial $f(x)\in F_{q^m}[x;\sigma]$ with $\deg f(x)=n$ and \linebreak$k=\min_{i\in\N}\{i|f_i\ne0\}$, the splitting field $K_f$ is the smallest field such that in $K[x;\sigma]$, $f(x)$ has $\llbracket n-k\rrbracket$ distinct nonzero right roots.
\end{definition}

Next, we note the relationship between $K_f$, the splitting field of the evaluation polynomial $\ov{f^r}(y)$, and $K_f'$, the splitting field of the linearized polynomial $\ov{f}(y)=\sum_{i=1}^nf_iy^{q^i}$. An algorithm for the factorization of the latter can be found in \cite{Splitting}.

\begin{lemma}
Let $K_f$ be the splitting field of $\ov{f^r}(y)$, and $K_f'$ be the splitting field of $\ov{f}(y)=\sum_{i=0}^nf_iy^{q^i}$. Then $K_f\subseteq K_f'$, and $K_f\subsetneq K_f'$ if and only if there is some $a\in K_f$ such that $\ov{f^r}(a)_r=0$, but $a\notin[1]$.
\end{lemma}

\begin{proof}
Note that we may write
\[y\ov{f^r}(y^{q-1})
=y\sum_{i=0}^nf_i(y^{q-1})^{\frac{q^i-1}{q-1}}
=y\sum_{i=0}^nf_iy^{q^i-1}
=\sum_{i=0}^nf_iy^{q^i}=\ov{f}(y).\]

By definition, in $K_f$ we may write $\ov{f^r}(y)=\prod_{i=1}^n(y-a_i)$, where $a_i\in K_f$ for $1\le i\le n$. This means we may write
\[\ov{f}(y)=y\ov{f^r}(y^{q-1})=y\prod_{i=1}^n(y^{q-1}-a_i).\]
This polynomial in turn splits over $K_f'$. In particular, for each $1\le i\le n$, we may write $(y^{q-1}-a_i)=\prod_{\lambda\in\F_q}(y-\lambda b_i)$, where $b_i\in K_f'$ is such that $b_i^{q-1}=a_i$ for all $1\le i\le n$. Since $b_i\in K_f'$, $b_i^{q-1}=a_i\in K_f'$. This means that $\ov{f^r}(y)$ splits over $K_f'$, and so we have $K_f\subseteq K_f'$.

For the second part, if there exists some $a\in K_f$ such that $\ov{f^r}(a)_r=0$, but $a\notin[1]$, then there is no $b\in K_f$ such that $b^{q-1}=a$. From the above factorization, this means that $b\in K_f'$, but $b\notin K_f$, so $K_f\subsetneq K_f'$. For the other direction, if there is no such $a$, then for each $1\le i\le n$, we know that $a_i\in[1]$, and so there is some $b_i\in K_f$ such that $b_i^{q-1}=a_i$, and thus $\lambda b_i\in K_f$ for all $\lambda\in\F_q$ and $1\le i\le n$, and thus $\ov{f}(y)$ splits over $K_f$, which means $K_f'\subseteq K_f$ and the two fields must be equal.
\end{proof}

We now consider the classes in which roots of a polynomial reside. Since $\F_{q^m}$ has $q^m$ elements, besides the $0$ element, there are $q-1$ conjugacy classes of elements with $\frac{q^m-1}{q-1}=\llbracket m\rrbracket$ elements in each. We first note an important property about these values

\begin{lemma}
	For any integers $s,d\in\N$, if $s\le d$, then $(q-1)\llbracket s\rrbracket<q^d$.
\end{lemma}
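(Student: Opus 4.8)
The plan is simply to unwind the definition of the bracket notation and reduce to the monotonicity of $q^i$ in $i$. Recall that $\llbracket s\rrbracket=\frac{q^s-1}{q-1}$, so multiplying through by $q-1$ gives the clean identity $(q-1)\llbracket s\rrbracket=q^s-1$. The claimed inequality $(q-1)\llbracket s\rrbracket<q^d$ is therefore equivalent to $q^s-1<q^d$.

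First I would record that $q\ge 2$ (it is a prime power), so $q^s-1<q^s$. Next, since $s\le d$ and $q>1$, we have $q^s\le q^d$. Chaining these two observations yields $q^s-1<q^s\le q^d$, which is exactly what is needed. No case analysis is required; the identity $(q-1)\llbracket s\rrbracket=q^s-1$ handles the boundary case $s=0$ (where $\llbracket 0\rrbracket=0$) automatically, since then the left side is $0<q^d$.

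There is no real obstacle here: the only subtlety is remembering that the product $(q-1)\llbracket s\rrbracket$ telescopes to $q^s-1$ rather than something more complicated, after which the bound is immediate. The statement is presumably being isolated as a lemma because it will be invoked repeatedly in the sequel (e.g.\ to compare the total number of roots $(q-1)\llbracket s\rrbracket$ living in a union of conjugacy classes against the degree-type quantity $q^d$), so I would keep the proof to two or three lines and state the key identity explicitly so later sections can cite it.
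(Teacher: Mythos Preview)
Your proof is correct and essentially identical to the paper's: both establish the identity $(q-1)\llbracket s\rrbracket=q^s-1$ (you directly from the definition $\llbracket s\rrbracket=\frac{q^s-1}{q-1}$, the paper via the telescoping sum $(q-1)\sum_{i=0}^{s-1}q^i$) and then conclude $q^s-1<q^s\le q^d$.
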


\begin{proof}
	We note that $\llbracket s\rrbracket=\frac{q^s-1}{q-1}$, and so $(q-1)\llbracket s\rrbracket=q^s-1\le q^d-1<q^d$.
\end{proof}

Next we group the roots of $f(x)$ in $K_f[x;\sigma]$ by conjugacy classes. Noting as before that we may factor out $x^k$, the maximum number of independent nonzero roots of $f(x)$ is $n-k$. We claim that all of these are in the same class.

\begin{theorem}
	Given a polynomial $f(x)\in R$, in the skew polynomial ring using the splitting field $K_f$, $f(x)$ has $\llbracket n-k\rrbracket$ distinct right roots, all in the same conjugacy class in $K_f$.
\end{theorem}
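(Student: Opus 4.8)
The idea is to strip off the pure $x$-power factor, recognize the set of nonzero roots as a flat of the matroid $\Mr$ built over the splitting field, split that flat across conjugacy classes, and close the argument with the counting estimate furnished by the preceding lemma on $(q-1)\llbracket s\rrbracket$.

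First I would reduce to a cleaner polynomial. Since $\sigma$ is an automorphism, $f(x)=x^kf_2(x)$ with $\deg f_2=r:=n-k$, and the nonzero right roots of $f$ in $K_f$ are exactly the right roots of $f_2$; by the previous theorem this set, call it $W$, satisfies $|W|=\llbracket r\rrbracket$. If $r=0$ there is nothing to prove, so assume $r\ge 1$. The goal becomes: $W\subseteq[\alpha^i]$ for a single conjugacy class. Next I would argue that $W$ is a flat of $\Mr$ over $K_f$. By minimality of $\mu_{W,r}(x)$ and additivity of right evaluation (the remainder, equivalently $a\mapsto\ov{f^r}(a)$, is $\F$-linear), $\mu_{W,r}(x)$ is a right divisor of $f_2(x)$; hence by the product-evaluation theorem every right root of $\mu_{W,r}$ is a right root of $f_2$, so $\ov{W}^r\subseteq W$. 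Since always $W\subseteq\ov{W}^r$, we get $\ov{W}^r=W$ and $\rank W=\deg\mu_{W,r}\le\deg f_2=r$.

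Then I would decompose $W$ along conjugacy classes. Using the direct-sum structure of $\Mr$ over the classes $[0]=\{0\}$ and $[\alpha^i]$, $0\le i\le q-2$ (the ``union of class bases is a basis'' fact, Corollary 4.4 of \cite{Wedderburn}, as invoked in the isomorphism theorem), the flat $W$ splits as $W=\bigsqcup_i W_i$ with $W_i:=W\cap[\alpha^i]$, each $W_i$ a flat of $[\alpha^i]_r$ and $\rank W=\sum_i\rank W_i$; there is no $[0]$-part since the roots are nonzero. (Flatness of $W_i$ can also be seen directly: $\mu_{W_i,r}\mid_r\mu_{W,r}\mid_r f_2$, so the right roots of $\mu_{W_i,r}$ lie in $W$, and being conjugate to elements of $W_i$ they lie in $W_i$.) Writing $d_i:=\rank W_i$, Lemma~\ref{rclose} together with the isomorphism $\gamma_i\colon[1]_r\to[\alpha^i]_r$ shows that a rank-$d$ flat of $[\alpha^i]_r$ has exactly $\llbracket d\rrbracket$ elements (the relevant $\F_q$-independent preimages span a $q^{d}$-element space, and the $(q-1)$-power map has fibers of size $q-1$), so $|W_i|=\llbracket d_i\rrbracket$. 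Collecting, $\llbracket r\rrbracket=|W|=\sum_i\llbracket d_i\rrbracket$ and $\sum_i d_i=\rank W\le r$.

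Finally I would rule out more than one nonzero $d_i$. Suppose $t\ge 2$ of the $d_i$ are positive; since there are only $q-1$ nonzero classes, $t\le q-1$, and ordering $d_1\ge d_2\ge\cdots$ we get $d_1\le r-(t-1)\le r-1$ because the other positive ranks are each at least $1$ and the $d_i$ sum to at most $r$. Then
\[
\llbracket r\rrbracket=\sum_{d_i>0}\llbracket d_i\rrbracket\le(q-1)\llbracket d_1\rrbracket<q^{\,r-1}\le\llbracket r\rrbracket,
\]
where the strict inequality is the preceding lemma applied with $s=d_1\le r-1$ and the last step uses $\llbracket r\rrbracket=1+q+\cdots+q^{\,r-1}\ge q^{\,r-1}$. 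This contradiction forces exactly one $d_i$ to be positive, and it must equal $r$, so all $\llbracket n-k\rrbracket$ right roots lie in the single class $[\alpha^i]$. The step I expect to be the main obstacle is the class-wise decomposition in the third paragraph — establishing that $W_i$ is a flat and that $\rank W=\sum_i\rank W_i$ — since this is exactly where the matroid direct-sum structure (via \cite{Wedderburn}) is doing the essential work; everything after it is the elementary estimate above.
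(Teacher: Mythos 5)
Your proposal is correct and follows essentially the same route as the paper: group the nonzero roots by conjugacy class, bound the number of roots in each class by $\llbracket d_i\rrbracket$ via the rank/closure structure, and play the total count $\llbracket n-k\rrbracket$ against the preceding lemma $(q-1)\llbracket s\rrbracket<q^d$ to force a single class. Your write-up is more careful than the paper's (explicit flatness of the root set, exact per-class counts, and an inequality chain that avoids the paper's implicit use of $\llbracket n-k\rrbracket>q^{n-k-1}$, which fails in the trivial case $n-k=1$), but the underlying argument is the same.
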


\begin{proof}
	Let $Z$ be the set of right roots of $f(x)\in K_f[x;\sigma]$. Then if $\alpha$ is a primitive element of $K_f$, the conjugacy classes are $[\alpha^i]$ for $i\in\{0,\dots,q-2\}$. We then let $s=\max_{i\in\{0,\dots,q-2\}}\rank(Z\cap [\alpha^i])$. Then there are at most $s$ independent right roots in each of the $q-1$ classes. If there are $s$ independent right roots in a given class, then there are $\llbracket s\rrbracket$ total right roots in that class, and so the maximum possible number of right roots would be $(q-1)\llbracket s\rrbracket$. If we assume that $s\le n-k-1$, then we have $(q-1)\llbracket s\rrbracket<q^{n-k-1}$ from the lemma above. However, we know that $f(x)$ has $\llbracket n-k\rrbracket=\sum_{i=0}^{n-k-1}q^i>q^{n-k-1}$ right roots in $K_f[x;\sigma]$, so it must be that $s=n-k$, and so all of the roots of $f(x)$ are in one class in $K_f$.
\end{proof}

\section{Conclusions}
\label{conclusions}
We have seen that when $\Mr$ and $\Ml$ are the matroids defined by independent sets of roots of polynomials in a skew polynomial ring over $\F_{q^m}$, then there is a matroid bijection between $\Mr$ and $\Ml$. It maps elements from $[1]$ to $[1]$ via $\phi(a)=a^{\frac{q^{m-1}-1}{q-1}}$, and applies to elements of other classes by first mapping to $[1]$ and then mapping back to the original class. Then it was also proved that if $\T$ is a finite extension field of $\F_{q^m}$, we can construct a matching automorphism $\gamma$ and $\gamma$-derivation $\eta$ so that $S=\T[x;\gamma,\eta]$ is a skew polynomial ring with $R$ as a subring. In fact, this can be done in such a way that all of the roots of a given polynomial in $R$ are now in a single class. This opens up several doors for examining the deeper structure of roots.

\pagebreak
\bibliographystyle{siam}
\bibliography{bibliography}

\end{document}